\theoremstyle{plain}
\newtheorem{theorem}{Theorem}
\newtheorem{lemma}{Lemma}
\newtheorem{corollary}{Corollary}
\newtheorem*{mtheorem}{Main Theorem}
\newtheorem{conjecture}{Conjecture}
\newtheorem{claim}{Case}
\newcommand{\Z}{\mathbb{Z}}
\newcommand{\bi}{\begin{itemize}}
\newcommand{\ei}{\end{itemize}}
\newcommand{\be}{\begin{enumerate}}
\newcommand{\ee}{\end{enumerate}}
\newcommand{\n}{\beta}
\newcommand{\emp}{\emptyset}
\newcommand{\X}{\times}
\newcommand{\eps}{\epsilon}
\newcommand{\A}{\alpha}
\newcommand{\pd}{\partial}
\newcommand{\Pa}{\mathcal{P}}
\newcommand{\cci}{\overline{\chi}}
\numberwithin{definition}{section}
\numberwithin{example}{section}
\numberwithin{lemma}{section}
\numberwithin{theorem}{section}
\numberwithin{corollary}{section}
\begin{document}
\title{Products of Farey graphs are totally geodesic in the pants graph}

\author[S. Taylor]{Samuel J. Taylor}
\address{Department of Mathematics, 
University of Texas at Austin, 
1 University Station C1200, 
Austin, TX 78712, U.S.A.
}
\email{\href{mailto:staylor@math.utexas.edu}{staylor@math.utexas.edu}}

\author[A. Zupan]{Alexander Zupan}
\address{Department of Mathematics, 
University of Texas at Austin, 
1 University Station C1200, 
Austin, TX 78712, U.S.A.
}
\email{\href{mailto:zupan@math.utexas.edu}{zupan@math.utexas.edu}}

\thanks{The first author is partially supported by NSF RTG grants DMS-0636557 and DMS-1148490. The second author is supported by the National Science Foundation under Award No. DMS-1203988.}

\maketitle

\begin{abstract}
We show that for a surface $\Sigma$, the subgraph of the pants graph determined by fixing a collection of curves that cut $\Sigma$ into pairs of pants, once-punctured tori, and four-times-punctured spheres is totally geodesic. The main theorem resolves a special case of a conjecture made in \cite{APS1} and has the implication that an embedded product of Farey graphs in any pants graph is totally geodesic.  In addition, we show that a pants graph contains a convex $n$-flat if and only if it contains an $n$-quasi-flat.
\end{abstract}

\section{Introduction}

The pants graph $\Pa(\Sigma)$ has emerged as an central object in low- \linebreak dimensional geometry and topology over the past decade. The most prominent example of the pants graph's importance is the celebrated result of Brock that the pants graph $\Pa(\Sigma)$ is quasi-isometric to the Teichm\"uller space $\mathcal{T}(\Sigma)$ with its Weil-Petersson (WP) metric \cite{Brock1}.  As a consequence, the large-scale geometry of $\Pa(\Sigma)$ is the same as that of $\mathcal{T}(\Sigma)$, and $\Pa(\Sigma)$ may be used to investigate the geometry of Teichm\"uller space \cite{BMMII, BMar}.  As evidence of the further significance of the pants graph, Brock has shown that distances in $\Pa(\Sigma)$ are coarsely related to volumes of convex cores of quasi-Fuchsian 3-manifolds \cite{Brock1} and volumes of fibered hyperbolic 3-manifolds \cite{Brock2}.  The pants graph has also been used to study the topology of $3$-manifolds. Johnson has developed 3-manifold invariants based on the pants graph of a Heegaard surface \cite{JJheg}, and the second author has produced similar results for knots in 3-manifolds \cite{Zuppy}. \\

While the above results demonstrate some of the striking connections between the pants graph and low-dimensional manifolds, the intrinsic geometry of the pants graph remains relatively unexplored.  Observing that the mapping class group of $\Sigma$ acts naturally on $\Pa(\Sigma)$, we note one important theorem about the rigidity of $\Pa(\Sigma)$, which has been proved by Margalit:

\begin{theorem}\cite{Mar}
Any automorphism of $\Pa(\Sigma)$ is induced by an element of the mapping class group of $\Sigma$.
\end{theorem}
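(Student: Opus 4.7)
The plan is to reduce the statement to Ivanov's celebrated theorem on automorphisms of the curve complex $\mathcal{C}(\Sigma)$, which asserts that (outside of a short list of exceptional surfaces) every automorphism of $\mathcal{C}(\Sigma)$ is induced by an element of the mapping class group. To do this, I will need to extract from any automorphism $\phi \in \text{Aut}(\Pa(\Sigma))$ an induced automorphism $\phi_* \in \text{Aut}(\mathcal{C}(\Sigma))$ in a purely combinatorial way.

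The first step is to encode individual simple closed curves in terms of the local structure of $\Pa(\Sigma)$. Recall that an edge of $\Pa(\Sigma)$ corresponds to an elementary move replacing a single curve $c$ in a pants decomposition $P$ with another curve $c'$ supported in the unique complementary component of $P \setminus c$ of complexity one (a one-holed torus or a four-holed sphere). Thus, from a vertex $P$, the edges of $\Pa(\Sigma)$ incident to $P$ partition naturally into ``directions'' indexed by the curves of $P$; moreover, the full subgraph on the union of $\{P\}$ with all pants decompositions differing from $P$ only in the curve $c$ is isomorphic to (a piece of) the Farey graph $\mathcal{F}$ associated with the complementary subsurface. The key task is to characterize this Farey-direction purely in terms of the graph $\Pa(\Sigma)$, so that automorphisms must permute these directions at each vertex. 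One approach is to look at the ``second neighborhood'' structure: two elementary moves swap commuting pairs of curves iff the resulting $4$-cycle can be completed to a square in $\Pa(\Sigma)$, and this lets one recover the combinatorial type of the complementary subsurface and the identity of the swapped curve.

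Once curves are detected at each vertex, define $\phi_*$ on $\mathcal{C}(\Sigma)$ as follows: for a curve $c$, choose a pants decomposition $P \ni c$, let $D_c \subset \Pa(\Sigma)$ be the Farey-direction at $P$ corresponding to $c$, and let $\phi_*(c)$ be the curve at $\phi(P)$ whose Farey-direction equals $\phi(D_c)$. Well-definedness (independence of the choice of $P$) follows by moving along edges of $\Pa(\Sigma)$ and checking compatibility. To promote $\phi_*$ to a simplicial automorphism of $\mathcal{C}(\Sigma)$, verify that disjointness is preserved: two distinct curves $c_1,c_2$ are disjoint iff they lie together in a common pants decomposition, which is a condition stated entirely in terms of vertex coincidence in $\Pa(\Sigma)$.

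Finally, Ivanov's theorem supplies a mapping class $f \in \text{MCG}(\Sigma)$ with $f_* = \phi_*$ on $\mathcal{C}(\Sigma)$; since a pants decomposition is determined by its set of curves, $f$ must agree with $\phi$ on vertices of $\Pa(\Sigma)$ and hence induce $\phi$. The main obstacle is the first step: combinatorially distinguishing Farey-directions at a vertex from arbitrary infinite families of edges, and in particular telling apart the one-holed torus and four-holed sphere cases. This requires a careful local analysis of small circuits (triangles, squares, and pentagons) in $\Pa(\Sigma)$ and is where most of the work lies; the passage to $\mathcal{C}(\Sigma)$ and the appeal to Ivanov are then comparatively formal. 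Low-complexity surfaces where either $\Pa(\Sigma)$ degenerates to a Farey graph or Ivanov's theorem requires modification must also be handled separately.
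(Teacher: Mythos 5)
This statement is not proved in the paper at all: it is quoted as background from Margalit's article \cite{Mar}, so there is no internal proof to compare against. Your strategy --- detect curves inside $\Pa(\Sigma)$ via the Farey-graph ``directions'' at a vertex, pass to an induced automorphism of the curve complex, and invoke curve-complex rigidity (Ivanov, together with Korkmaz and Luo for the low-genus and punctured cases) --- is in fact the strategy of Margalit's proof, so the plan is pointed in the right direction.

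As a proof, however, it has a genuine gap, and you essentially acknowledge it: the entire content of the theorem lies in the first step, the purely combinatorial characterization of the Farey pieces and of the subgraphs $\Pa_c(\Sigma)$ of decompositions containing a fixed curve $c$, and this is only announced, not carried out. The specific test you propose (completing a pair of elementary moves to a square) is not by itself enough: squares detect commuting moves with disjoint supports, but you still must show that an automorphism cannot scramble edges at a vertex among different directions, must distinguish the $\Sigma_{1,1}$ and $\Sigma_{0,4}$ type directions, and must verify well-definedness of $\phi_*$ under change of base pants decomposition; in Margalit's argument this requires a classification of all small circuits (triangles, squares, and the Hatcher--Thurston pentagons) in $\Pa(\Sigma)$, which is the bulk of the work. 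A second concrete obstruction is the final appeal to curve-complex rigidity: it is false as stated for $\Sigma_{1,2}$, since $\mathcal{C}(\Sigma_{1,2})\cong\mathcal{C}(\Sigma_{0,5})$ has automorphisms not induced by mapping classes (Luo), so that case cannot be dismissed with a remark about ``modification''; one must show directly that an automorphism of $\Pa(\Sigma_{1,2})$ induces a curve-complex automorphism preserving the topological type of curves (or argue separately), as Margalit does. Until the detection lemma and the exceptional surfaces are actually handled, the argument is a correct outline rather than a proof.
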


A similar result has been obtained by Aramayona with regard to subsurfaces and inclusion maps.  We define a \emph{multicurve} $Q$ to be a collection of pairwise disjoint simple closed curves in $\Sigma$, and we let $\Pa_Q(\Sigma)$ be the the full subgraph of $\Pa(\Sigma)$ consisting of all pants decompositions containing $Q$.  Aramayona has proved

\begin{theorem}\cite{Ara}\label{include}
Suppose $\Sigma'$ and $\Sigma$ are surfaces, and $i:\Pa(\Sigma') \rightarrow \Pa(\Sigma)$ is an injective simplicial map.  Then there exists a multicurve $Q$ in $\Sigma$ such that $i(\Pa(\Sigma')) = \Pa_Q(\Sigma)$.
\end{theorem}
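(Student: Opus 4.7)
The plan is to build up from local link data at a single vertex of $\Pa(\Sigma')$ to a global subsurface embedding $\Sigma' \hookrightarrow \Sigma$, from which the multicurve $Q$ will arise as the boundary of the image. Fix a pants decomposition $P' \in \Pa(\Sigma')$ and set $P = i(P')$. Edges of $\Pa(\Sigma')$ incident to $P'$ correspond to elementary moves, each supported in a complexity-one subsurface of $\Sigma'$ (either $S_{1,1}$ or $S_{0,4}$) that swaps a single curve $c \in P'$ for a curve meeting it minimally. The set of neighbors of $P'$ obtained by moves on a fixed $c$ thus forms the link of $P' \setminus \{c\}$ in an embedded Farey graph, and because $i$ is injective and simplicial, this link is sent injectively into the link of $P$ in $\Pa(\Sigma)$.

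Using this, I would define a map $f$ on curves by matching these Farey links: each $c \in P'$ should correspond to a unique $f(c) \in P$ such that moves on $c$ map to moves on $f(c)$. Verifying that $f$ is well defined (independent of the chosen elementary move) should follow from injectivity together with the observation that curves fixed throughout a move must remain fixed in the image. I would then propagate $f$ along paths in $\Pa(\Sigma')$: because every essential simple closed curve on $\Sigma'$ appears in some pants decomposition and $\Pa(\Sigma')$ is connected, this extends $f$ to an injection on the set of isotopy classes of essential simple closed curves on $\Sigma'$.

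To upgrade $f$ to a subsurface embedding, I would verify that $f$ preserves disjointness: two curves coexisting in a pants decomposition of $\Sigma'$ map to curves coexisting in its image, hence disjoint. Thus $f$ is a simplicial injection of curve complexes $\mathcal{C}(\Sigma') \to \mathcal{C}(\Sigma)$. Appealing to Ivanov-style rigidity of curve complexes, with separate treatment for the low-complexity surfaces $S_{1,1}$, $S_{0,4}$, $S_{1,2}$, and $S_{0,5}$, the map $f$ is induced by a $\pi_1$-injective embedding $\iota : \Sigma' \hookrightarrow \Sigma$. Let $Q$ be the multicurve of boundary components of $\iota(\Sigma')$ that are essential in $\Sigma$.

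Both containments then follow. For $i(\Pa(\Sigma')) \subseteq \Pa_Q(\Sigma)$, note that any pants decomposition of $\Sigma'$ pushed forward by $\iota$ and augmented by $Q$ yields a pants decomposition of $\Sigma$ containing $Q$, and this must agree with $i$ on the original vertex by construction. For the reverse containment, any element of $\Pa_Q(\Sigma)$ restricts to a pants decomposition on $\iota(\Sigma')$, which pulls back under $f$ to a pants decomposition of $\Sigma'$ whose image is the original. The main obstacle will be the local-to-global step: organizing the link data so that $f$ is unambiguously defined and globally consistent, and handling the sporadic low-complexity cases in which Ivanov's theorem must be replaced by case-specific analyses of maps between Farey graphs and their relatives.
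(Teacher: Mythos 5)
You should first note that the paper does not prove this statement at all: Theorem \ref{include} is quoted from Aramayona \cite{Ara}, so the only meaningful comparison is with that external proof. Your outline (analyze the Farey links at a vertex, induce a map $f$ on curves, realize $f$ geometrically, read off $Q$) is in the general spirit of Aramayona's argument, but as written it has two genuine gaps. First, the well-definedness of $f$ is asserted, not proved: your ``observation that curves fixed throughout a move must remain fixed in the image'' is exactly the point at issue. Simpliciality only sends adjacent vertices to adjacent vertices; a priori two different elementary moves on the same curve $c \in P'$ could be sent to moves on \emph{different} curves of $i(P')$, and moves on $c$ based at different pants decompositions could be matched inconsistently. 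The standard repair is combinatorial and must be carried out: any triangle in $\Pa(\Sigma)$ lies in a single Farey subgraph (moves on two distinct curves yield vertices at distance two, so a triangle forces a common codimension-one multicurve), injectivity guarantees that image triangles are honest triangles, and the link of a vertex in a Farey graph is connected, so the identification of the replaced curve propagates over the whole Farey link and then along paths in $\Pa(\Sigma')$. None of this follows from ``injectivity plus simpliciality'' without the triangle analysis.

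Second, and more seriously, the step ``appeal to Ivanov-style rigidity of curve complexes'' is not an available citation and in its naive form is false. Ivanov's theorem and its extensions (Korkmaz, Luo, Irmak, Shackleton) concern automorphisms or suitably injective \emph{self}-maps; injective simplicial maps between curve complexes of \emph{different} surfaces need not be induced by embeddings. For example, there are simplicial isomorphisms $\mathcal{C}(\Sigma_{1,2}) \cong \mathcal{C}(\Sigma_{0,5})$ and $\mathcal{C}(\Sigma_{2,0}) \cong \mathcal{C}(\Sigma_{0,6})$ coming from hyperelliptic quotients, and composing such an isomorphism with a genuinely geometric embedding of curve complexes produces injective simplicial maps into larger surfaces that are induced by no surface embedding. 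So after showing your curve map preserves disjointness you cannot conclude; you must exploit the finer data the pants-graph embedding actually provides (Farey adjacency, i.e.\ intersection patterns inside complexity-one supports, and the constant multicurve $i(P') \setminus f(P')$), which is precisely the content of Aramayona's proof rather than something to quote. Relatedly, your final equality $i(\Pa(\Sigma')) = \Pa_Q(\Sigma)$ needs the constancy of the non-moving curves along every edge path and the complexity count $|Q| = \xi(\Sigma) - \xi(\Sigma')$, forcing the complementary subsurface of $Q$ to be exactly the image subsurface; both are asserted but not derived in your sketch.
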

Thus, in addition to being rigid with respect to automorphisms, the pants graph is rigid with respect to inclusions.  A natural problem which follows is to determine the geometry of the subspaces $\Pa_Q(\Sigma)$ within $\Pa(\Sigma)$.  Although the correspondence between $\Pa(\Sigma)$ and $\mathcal{T}(\Sigma)$ mentioned above is a quasi-isometry, we may look to $\mathcal{T}(\Sigma)$ for clues as to how these subspaces might behave.  By a result of Masur, the boundary of the WP metric completion $\overline{\mathcal{T}}(\Sigma)$ of $\mathcal{T}(\Sigma)$ consists of strata of the form $\mathcal{T}(\Sigma')$, where $\Sigma'$ is the noded Riemann surface obtained by pinching each curve in a multicurve $Q$ \cite{Mas}.  Moreover, each stratum is \emph{totally geodesic} in $\overline{\mathcal{T}}(\Sigma)$: every geodesic in $\overline{\mathcal{T}}(\Sigma)$ with endpoints in $\mathcal{T}(\Sigma')$ is contained entirely in $\mathcal{T}(\Sigma')$ \cite{Wol}. \\

By interpreting these results in terms of $\Pa(\Sigma$), Aramayona, Parlier, and Shackleton have made the following conjecture:

\begin{conjecture}\cite{APS1}\label{aps}
Suppose that $\Sigma'$ and $\Sigma$ are surfaces, and $i:\Pa(\Sigma') \rightarrow \Pa(\Sigma)$ is an injective simplicial map.  Then $i(\Pa(\Sigma'))$ is totally geodesic $\Pa(\Sigma)$.
\end{conjecture}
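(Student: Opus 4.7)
By Aramayona's Theorem~\ref{include}, the image $i(\Pa(\Sigma'))$ is of the form $\Pa_Q(\Sigma)$ for some multicurve $Q$, so the conjecture reduces to showing that $\Pa_Q(\Sigma)$ is totally geodesic in $\Pa(\Sigma)$ for every multicurve $Q$. Let $Y_1,\dots,Y_k$ be the non-pants components of $\Sigma \setminus Q$. My first step is to observe that $\Pa_Q(\Sigma)$ is canonically isomorphic, as a graph, to the Cartesian product $\prod_{i=1}^k \Pa(Y_i)$. Indeed, any elementary move preserving $Q$ must be supported in a complexity-one subsurface disjoint from $Q$, hence lies inside a unique $Y_i$ and changes only the $i$-th coordinate. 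In particular, for $P_1,P_2 \in \Pa_Q(\Sigma)$,
\[
d_{\Pa_Q(\Sigma)}(P_1,P_2) \;=\; \sum_{i=1}^k d_{\Pa(Y_i)}(P_1|_{Y_i},P_2|_{Y_i}).
\]

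The plan is to promote this identity to the ambient pants graph by constructing, for each $i$, a projection $\pi_i : \Pa(\Sigma) \to \Pa(Y_i)$ which (a)~agrees with the natural restriction on $\Pa_Q(\Sigma)$ and (b)~has the joint $1$-Lipschitz property that every edge of $\Pa(\Sigma)$ changes the tuple $(\pi_1,\dots,\pi_k)$ by total distance at most one. For a pants decomposition $P$ not containing $Q$, I would define $\pi_i(P)$ by cutting $P$ along $Q$ and completing the resulting arcs and curves in $Y_i$ to a pants decomposition via a canonical surgery along $\partial Y_i$, in the spirit of the Masur--Minsky subsurface projection. Granting this joint Lipschitz property, any path $\gamma$ in $\Pa(\Sigma)$ from $P_1$ to $P_2$ satisfies $|\gamma| \geq \sum_i d_{\Pa(Y_i)}(P_1|_{Y_i},P_2|_{Y_i}) = d_{\Pa_Q(\Sigma)}(P_1,P_2)$. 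Since $\Pa_Q(\Sigma) \subseteq \Pa(\Sigma)$ already realizes this lower bound, every $\Pa(\Sigma)$-geodesic between such endpoints must change exactly one coordinate by one at each step, keeping the geodesic inside $\Pa_Q(\Sigma)$.

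The main obstacle is establishing the joint $1$-Lipschitz property of $(\pi_1,\dots,\pi_k)$. An elementary move in $\Pa(\Sigma)$ takes place in a complexity-one subsurface $Z$, and when $Z$ meets curves of $Q$ non-trivially the move may simultaneously alter the surgered curves used to define several $\pi_i$, so a priori the tuple could jump by more than one. Controlling these cross-component interactions is where the paper's hypothesis---that the components of $\Sigma \setminus Q$ are pants, once-punctured tori, or four-holed spheres---appears to provide traction: each $\Pa(Y_i)$ is then a Farey graph, and the rigid combinatorics of the Farey graph together with the explicit classification of complexity-one elementary moves should allow a direct case analysis showing that a genuinely cross-component move, after surgery, alters only one factor's coordinate. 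Extending this bookkeeping to higher-complexity $Y_i$, whose pants graphs themselves have far less-understood fine-scale geometry, is what I expect to keep the full conjecture open.
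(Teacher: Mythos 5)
The statement you are proving is stated in the paper as a conjecture (due to Aramayona--Parlier--Shackleton); the paper does not prove it, and neither do you: your own sketch concedes that the general case stays open and only gestures at the special case the paper actually establishes, namely the Main Theorem for $(n\times 1)$-multicurves. For that special case your framework is the right one and matches the paper's in outline (reduce via Theorem~\ref{include} to $\Pa_Q(\Sigma)$, identify $\Pa_Q(\Sigma)$ with the product of the Farey graphs $\Pa(Y_i)$, and compare a geodesic in $\Pa(\Sigma)$ to its subsurface projections), but the step you hang everything on is false. The joint per-edge $1$-Lipschitz property cannot hold: by Lemma~\ref{diam}, two disjoint arcs in a four-holed sphere component $Y_i$ can project to curves at distance $2$ in the Farey graph (Figure~\ref{project2}), so a single elementary move of $\Pa(\Sigma)$ can already move the projection tuple by $2$. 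There is no ``canonical surgery'' fix for this; the obstruction is intrinsic to $\Sigma_{0,4}$ projections, not an artifact of how you complete arcs to pants decompositions.

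The paper's substitute for your Lipschitz claim is Theorem~\ref{maintech}, which is genuinely weaker and much harder: for any path $\nu_0,\dots,\nu_p$ with $p\ge \cci(Y)$, \emph{after a commutation of edges} there is some initial segment of length $q$ along which $d_Y(\nu_0,\nu_q)\le q$. Proving this occupies essentially the whole paper (the Euler-characteristic bookkeeping $\cci_X(Y)$ of Lemma~\ref{corner}, Lemmas~\ref{cont1}--\ref{seam2}, and the long case analysis of Lemma~\ref{support-contain} involving rectangular annuli and rectangular pairs of pants). The endgame also differs from your ``each step changes exactly one coordinate'' bookkeeping: the paper argues by contradiction that if $\nu_1$ drops $Q$ then the first move does not change the projection at all, and then uses Lemma~\ref{support-contain} together with a maximality-over-commutations argument to force $d_Y(\nu_1,\nu_p)\le p-1$, hence $d(\nu_0,\nu_p)\le p-1$, contradicting geodesicity. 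So: your reduction and product decomposition are fine, your identification of the cross-component interaction as the hard point is accurate, but the quantitative control you assume is exactly what fails, and supplying its correct replacement is the actual content of the paper -- and even then only for complexity-one pieces, which is why the full conjecture remains open.
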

After applying Theorem \ref{include}, we see that this conjecture is equivalent to the assertion that for every multicurve $Q$ in $\Sigma$, $\Pa_Q(\Sigma)$ is totally geodesic in $\Pa(\Sigma)$.  In order to state which special cases of the conjecture are known, we make several definitions.  The \emph{complexity} $\xi(\Sigma)$ of a compact, orientable, connected surface $\Sigma$ with genus $g$ and $b$ boundary components is $\xi(\Sigma) = 3g + b - 3$, and it is a straightforward exercise to show that $\xi(\Sigma)$ is the number of curves in a pants decomposition of $\Sigma$.  Given a multicurve $Q$ in $\Sigma$, we define the \emph{complementary subsurface} $\Sigma_Q$ of $Q$ in $\Sigma$ to be the components of $\Sigma \setminus \eta(Q)$ which are not pairs of pants, where $\eta(\cdot)$ is an open regular neighborhood. \\

Conjecture \ref{aps} is known to be true in the case that $\xi(\Sigma_Q) = 1$ \cite{APS1}, in the case that $\Sigma_Q$ has two components of complexity one, each with one boundary component interior to $\Sigma$ \cite{APS2}, and in the case that $\Sigma_Q$ is a connected surface of the same genus as $\Sigma$ \cite{ALPS}. \\

We say that a multicurve $Q$ is $(n\X1)$ if $\Sigma_Q$ consists of $n$ components, each having complexity 1.  The main theorem in this paper is the following:

\begin{mtheorem} \label{main}
Let $Q$ be an $(n \X 1)$-multicurve.  Then $\mathcal{P}_Q(\Sigma)$ is totally geodesic in $\mathcal{P}(\Sigma)$.
\end{mtheorem}

Note that if $Q$ is an $(n\X1)$-multicurve, then $\Pa_Q(\Sigma)$ is a graph product of Farey graphs (see Section \ref{subs}).  Thus, applying Theorem C of \cite{Ara}, we obtain the following corollary:

\begin{corollary}
Suppose that $G$ is a graph product of Farey graphs and $\varphi:G \rightarrow \Pa(\Sigma)$ is a simplicial embedding.  Then $\varphi(G)$ is totally geodesic in $\Pa(\Sigma)$.
\end{corollary}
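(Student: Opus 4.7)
The plan is to prove the equivalent statement that every geodesic in $\Pa(\Sigma)$ between $P_1, P_2 \in \Pa_Q(\Sigma)$ is contained in $\Pa_Q(\Sigma)$. Since each component $\Sigma_j$ of $\Sigma_Q$ has complexity one, its pants graph is the Farey graph $F_j$, and the product decomposition $\Pa_Q(\Sigma) \cong F_1 \times \cdots \times F_n$ carries the $\ell^1$-metric. Writing $\alpha_j$ and $\alpha'_j$ for the unique curves of $P_1$ and $P_2$ sitting in $\Sigma_j$, it suffices to prove that $d_{\Pa(\Sigma)}(P_1, P_2) \geq \sum_{j=1}^n d_{F_j}(\alpha_j, \alpha'_j)$ and that equality is witnessed only by paths inside $\Pa_Q(\Sigma)$.

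My main strategy is to construct, for each $j$, a projection $\pi_j \colon \Pa(\Sigma) \to F_j$ extending the natural restriction $R \mapsto R \cap \Sigma_j$ defined on $\Pa_Q(\Sigma)$, and then to track how $\pi_j$ varies along an arbitrary geodesic. The natural candidate is the Masur--Minsky subsurface projection: when $R$ fails to contain some curve $c \in Q$ adjacent to $\Sigma_j$, some curve of $R$ crosses $\partial\Sigma_j$ as an arc and yields a well-defined (coarse) vertex of the curve complex of $\Sigma_j$, which equals $F_j$ since $\xi(\Sigma_j)=1$. The core of the argument is then a Lipschitz bound asserting that a single elementary move $R \to R'$ changes $\sum_j d_{F_j}(\pi_j(R), \pi_j(R'))$ by at most one, with the additional sharpening that this quantity is exactly one when both $R$ and $R'$ lie in $\Pa_Q(\Sigma)$. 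Granting the bound, telescoping along a geodesic $R_0, \ldots, R_k$ yields $k \geq \sum_j d_{F_j}(\pi_j(R_0), \pi_j(R_k)) = \sum_j d_{F_j}(\alpha_j, \alpha'_j)$, and the sharpened bound forces the equality case to remain in $\Pa_Q(\Sigma)$.

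The main obstacle is verifying the Lipschitz bound when the elementary move removes or inserts a curve $c \in Q$: the new curve $c'$ crossing $c$ may enter one or even two of the components $\Sigma_j$ adjacent to $c$, a priori altering two projections at once. The $(n\X 1)$-hypothesis should rescue this by ensuring that the merged complexity-one region across $c$ is built entirely from Farey-structured pieces, so that any jump in one $\pi_j$ is compensated by a lack of progress in the other $\pi_{j'}$. If working through the subsurface projections proves too delicate, a parallel approach I would pursue is a direct local surgery: given a maximal subpath $R_a, \ldots, R_b$ of the geodesic whose interior avoids $\Pa_Q(\Sigma)$ (beginning with the deletion of some $c \in Q$ and ending with its reinsertion), replace it by an equal-length path inside $\Pa_Q(\Sigma)$ constructed from the Farey factors of the $\Sigma_j$ adjacent to $c$, then induct on the number of such excursions. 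In either approach, the delicate bookkeeping is the same: every removal or restoration of a $Q$-curve must be paid for by a corresponding productive step in some $F_j$.
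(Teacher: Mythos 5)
Your first step---``the equivalent statement that every geodesic between $P_1,P_2\in\Pa_Q(\Sigma)$ is contained in $\Pa_Q(\Sigma)$''---is not an equivalence you are entitled to for free. The corollary is about an \emph{abstract} simplicial embedding $\varphi:G\rightarrow\Pa(\Sigma)$ of a graph product of Farey graphs; a priori its image need not be of the form $\Pa_Q(\Sigma)$ at all, let alone for an $(n\X 1)$-multicurve $Q$. The paper gets this identification from Aramayona's rigidity theorem (Theorem C of \cite{Ara}, the disconnected-surface analogue of Theorem \ref{include}): the image of any injective simplicial map of a pants graph is $\Pa_Q(\Sigma)$ for some multicurve $Q$, and since $G$ is a product of Farey graphs, $Q$ is forced to be $(n\X 1)$. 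After that, the corollary is immediate from the Main Theorem. Your proposal silently assumes this rigidity step and then sets out to reprove the Main Theorem itself.

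For the reproof, the core claim you lean on---that a single elementary move changes $\sum_j d_{F_j}(\pi_j(R),\pi_j(R'))$ by at most one, with equality exactly when both $R,R'$ lie in $\Pa_Q(\Sigma)$---is not only unproven but false as stated. A single move $R\rightarrow R'$ can change the projection to one four-punctured-sphere factor $Y_j$ by $2$: the curves of $R\cap R'$ are disjoint from the removed curve $\gamma$, and by Lemma \ref{diam} disjoint nonisotopic seams with endpoints on the same two components of $\pd Y_j$ project to curves at distance exactly $2$, so the max-min distance $d_{Y_j}(R,R')$ can equal $2$ even though only one curve was swapped; moreover one move can simultaneously affect several factors $Y_j$. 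This failure of a per-edge Lipschitz-$1$ bound is precisely why the paper does not argue edge-by-edge: its Theorem \ref{maintech} (via Lemma \ref{support-contain}) only asserts that, after commutation of edges, some \emph{prefix} $\nu_0,\dots,\nu_q$ satisfies $d_Y(\nu_0,\nu_q)\le q$, and establishing that requires the Euler-characteristic accounting $\cci_X(Y)$, the analysis of rectangular annuli and rectangular pairs of pants, and a lengthy case analysis. Your fallback ``local surgery'' plan (replace an excursion out of $\Pa_Q(\Sigma)$ by an equal-length path inside it) is a restatement of what must be proved, not an argument; constructing such a replacement is exactly the content of the missing technical work. So both halves of the corollary---the identification of $\varphi(G)$ with $\Pa_Q(\Sigma)$ and the total geodesicity of $\Pa_Q(\Sigma)$---remain unestablished in your proposal.
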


The \emph{rank} $r$ of a metric space $X$ is defined to be the maximal $r$ such that $\Z^r$ quasi-isometrically embeds in $X$.  By \cite{BF}, \cite{BM}, and \cite{Ham}, the rank $r$ of $\Pa(\Sigma)$ is the maximal $n$ such that $\Sigma$ contains an $(n \X 1)$-multicurve, where $r = \lfloor \frac{3g+b-2}{2} \rfloor$.  By fixing an $(r \X 1)$-multicurve $Q$ and a bi-infinite geodesic in each of the $r$ Farey graphs composing the graph product $G = \Pa_Q(\Sigma)$, we have another corollary:

\begin{corollary}\label{flats}
There exists an isometric embedding $\iota: \Z^r \rightarrow \Pa(\Sigma)$ if and only if $r \leq \lfloor \frac{3g + b - 2}{2} \rfloor$.
\end{corollary}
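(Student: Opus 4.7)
The plan is to derive this corollary as an almost immediate consequence of the Main Theorem, treating the two directions separately.

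For the ``only if'' direction, I would observe that every isometric embedding is in particular a quasi-isometric embedding. Hence the existence of an isometric embedding $\iota : \Z^r \to \Pa(\Sigma)$ forces $r$ to be at most the rank of $\Pa(\Sigma)$, which by \cite{BF, BM, Ham} equals $\lfloor \tfrac{3g+b-2}{2} \rfloor$.

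For the ``if'' direction, suppose $r \leq \lfloor \tfrac{3g+b-2}{2} \rfloor$. I would first choose an $(r \X 1)$-multicurve $Q$ in $\Sigma$; such a $Q$ exists by the rank formula (take a subcollection of any maximal $(r \X 1)$-multicurve). As explained in Section \ref{subs}, $\Pa_Q(\Sigma)$ is then a graph product of $r$ Farey graphs $F_1, \ldots, F_r$, and consequently the graph metric on $\Pa_Q(\Sigma)$ is the $L^1$ sum of the Farey metrics on the factors. Next, in each $F_i$ I would fix a bi-infinite geodesic $\gamma_i : \Z \to F_i$; the existence of such geodesics in the Farey graph is standard (one can take, for instance, the translation axis of a hyperbolic element of $PSL_2(\Z)$ acting on $F_i$). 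The product map
\[
\iota_0 : \Z^r \to \Pa_Q(\Sigma), \quad (n_1, \ldots, n_r) \mapsto (\gamma_1(n_1), \ldots, \gamma_r(n_r)),
\]
is then an isometric embedding of $(\Z^r, \ell^1)$ into $\Pa_Q(\Sigma)$, since the graph metric on the product splits as the sum of the factor metrics. Finally, the Main Theorem asserts that $\Pa_Q(\Sigma)$ is totally geodesic in $\Pa(\Sigma)$, which for a connected subgraph is equivalent to the inclusion $\Pa_Q(\Sigma) \hookrightarrow \Pa(\Sigma)$ being an isometric embedding. Composing with $\iota_0$ yields the desired $\iota$.

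The main obstacle is really the Main Theorem itself, which has already been stated. The remaining ingredients---that the graph product of Farey graphs carries the $L^1$ sum metric on its vertex set, and that each Farey graph admits a bi-infinite geodesic---are routine structural facts about the Farey graph and graph products.
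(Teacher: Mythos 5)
Your argument is correct and is essentially the paper's own: the ``only if'' direction follows from the rank computation of \cite{BF}, \cite{BM}, \cite{Ham}, and the ``if'' direction is obtained exactly as in the text preceding the corollary, by fixing an $(r\X 1)$-multicurve $Q$, taking bi-infinite geodesics in the $r$ Farey factors of $\Pa_Q(\Sigma)$ (with its $\ell^1$ product metric), and applying the Main Theorem to see that $\Pa_Q(\Sigma)$ sits isometrically in $\Pa(\Sigma)$. One small correction: to produce an $(r\X 1)$-multicurve for $r$ below the maximal value you should \emph{add} curves to a maximal-rank multicurve (filling some complexity-one pieces with pants curves), not pass to a subcollection; alternatively, embed $\Z^r$ as a coordinate subspace of $\Z^{r_{\max}}$.
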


We remark that shortly before the completion of this note, Jos\' e  Est\'evez announced a related result \cite{Est}. He shows the Main Theorem holds if, in addition, one assumes that each boundary component of $\Sigma_Q$ is separating, and as a consequence, he gives an alternate proof of Corollary \ref{flats}.
\\

\noindent \textbf{Acknowledgements} \, The first author thanks Alan Reid for his ongoing support and the second author thanks Cameron Gordon for helpful conversations and insights over the course of this project.

\section{Preliminaries}\label{pre}

Throughout this paper, $\Sigma$ will denote a compact, connected, orientable genus $g$ surface with $b$ boundary components and $3g+b-3 > 0$.  We occasionally use $\Sigma_{g,b}$ when we wish to emphasize $g$ and $b$, and we also call $\Sigma_{g,b}$ a $b$-times-punctured, genus $g$ surface (despite the fact that $\Sigma$ is compact).  We let $\eta(\cdot)$ represent an open regular neighborhood in $\Sigma$.  A \emph{loop} in $\Sigma$ is a simple closed curve and the loop $c$ is \emph{essential} if it is neither \emph{trivial} nor \emph{peripheral}. Recall that $c$ is trivial if it bounds a disk in $\Sigma$ and is peripheral if it is isotopic to a component of $\partial \Sigma$. We use the term \emph{curve} to mean a free isotopy class $[c]$ of an essential loop $c$. For curves $\alpha$ and $\beta$, we denote by $i(\alpha,\beta)$ their \emph{(geometric) intersection number}. This is by definition $\min\{|a\cap b|: a \in \alpha, b \in \beta\}$. The curves $\alpha$ and $\beta$ are \emph{disjoint} if $i(\alpha,\beta) = 0$; otherwise, they \emph{intersect}. \\

A \emph{multicurve} is a collection of pairwise disjoint curves in $\Sigma$. Given any collection of curves in $\Sigma$, we may always choose loop representatives that minimize pairwise geometric intersection numbers by, for example, choosing geodesic representatives in some fixed hyperbolic metric. (This is possible since $3g+b-3 > 0$.) We will make such choices of representatives without further comment. \\

A \emph{pants decomposition} $\nu$ of $\Sigma$ is defined to be a maximal multicurve on $\Sigma$.  Its named is derived from the fact that $\Sigma \setminus \eta(\nu)$ is a collection of  pairs of pants, i.e. copies of $\Sigma_{0,3}$. The complexity $\xi(\Sigma)$ is the cardinality $|\nu|$ of $\nu$, where $\xi(\Sigma) = 3g+b-3$.  An \emph{essential subsurface} $X$ of $\Sigma$ is a compact codimension-$0$ submanifold such that each component of $\partial X$ is nontrivial in $\Sigma$. Note that if $X$ is an essential subsurface of $\Sigma$, then $\xi(X) \le \xi(\Sigma)$ with equality if and only if all boundary components of $X$ are parallel to boundary components of $\Sigma$. For any multicurve $Q$, the \emph{codimension} of $Q$ is defined to be $\xi(\Sigma) - |Q|$.\\

The \emph{pants graph} $\Pa(\Sigma)$ of $\Sigma$ is the graph defined as follows: vertices are pants decompositions, and two pants decompositions $\nu$ and $\nu'$ are connected by an edge whenever they differ by an \emph{elementary move}.  By this we mean that $\nu \cap \nu'$ is a multicurve of codimension one, and letting $\gamma$ and $\gamma'$ denote the unique curves in $\nu \setminus \nu'$ and $\nu' \setminus \nu$, respectively, we have that $\gamma$ and $\gamma'$ intersect in the minimal possible number of times.  Observe that $\Sigma \setminus \eta(\nu \cap \nu')$ is a collection of pairs of pants and a subsurface $Y$ of complexity one (which must contain $\gamma$ and $\gamma'$).  If $Y$ is $\Sigma_{1,1}$, the requirement of minimal intersection number implies that $i(\gamma,\gamma') = 1$; otherwise $Y$ is $\Sigma_{0,4}$ and $i(\gamma,\gamma') = 2$.  See Figure \ref{pantsmove}.  In either case, we call (the isotopy class of) $Y$ the \emph{support} of the pants move from $\gamma$ to $\gamma'$. The pants graph is connected and is equipped with a natural metric $d$ on its vertex set by assigning length one to each edge and defining the distance from $\nu$ to $\nu'$ to be the length of the shortest path connecting $\nu$ to $\nu'$. We note that the above definition holds if $\Sigma$ is the disjoint union of components $\{\Sigma_i\}$.  In this case, $\Pa(\Sigma) = \Pa(\Sigma_1) \X \dots \X \Pa(\Sigma_n)$.\\

\begin{figure}[h!]
  \centering
    \includegraphics[width=.75\textwidth]{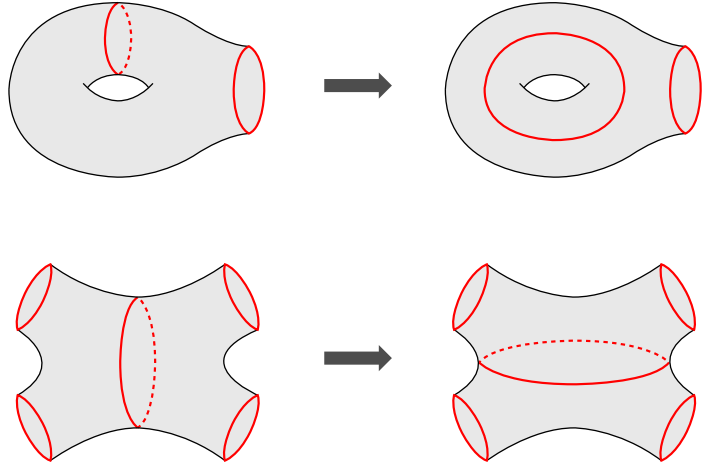}
    \caption{Two possible elementary moves with different supports.}
    \label{pantsmove}
\end{figure}

Suppose that $\nu_0,\nu_1,\dots,\nu_p$ is a path in $\Pa(\Sigma)$, and let $S_i$ denote the support of the $i$th elementary move in the path, $1 \leq  i \leq p-1$.  Define the \emph{support} $X$ of the path $\nu_0,\dots,\nu_p$ to be
\[ X = \bigcup_{i=1}^{p-1} \overline{S_i \setminus \eta(\pd S_i)}.\]
Thus, $\nu_0 \cap X,\dots,\nu_p \cap X$ is a path in $\Pa(X)$, and $\pd X$ is isotopic to curves in $\pd \Sigma \cup (\nu_0 \cap \nu_p)$ (possibly $\nu_0 \cap \nu_p = \emp$ and $\pd X = \pd \Sigma$).  It is important to note that, in general, $X \neq \bigcup S_i$; this is the case in Figure \ref{commute} below.\\

Now, let $\nu_0,\nu_1,\nu_2$ be a path of length two in $\Pa(\Sigma)$ whose support $X$ has two connected components.  Equivalently, $\nu_0,\nu_1$ and $\nu_1,\nu_2$ each differ by an elementary move with supports $S_1$ and $S_2$, respectively, such that the interiors of $S_1$ and $S_2$ are disjoint.  In this case, we say that the elementary moves \emph{commute} and we note that $\nu_0 \cap \nu_2$ is a codimension two multicurve. Define a \emph{commutation of edges} to be the path $\nu_0,\nu_1',\nu_2$, where $\nu_1' = (\nu_0 \cap \nu_2) \cup (\nu_2\setminus \nu_1)\cup (\nu_0 \setminus \nu_1)$ is a pants decomposition since $\nu_2 \setminus \nu_1 \subset S_2$ and $\nu_0 \setminus \nu_1 \subset S_1$. In other words, the commutation of edges is the path obtained by performing the pants move supported in $S_2$ before the pants move supported in $S_1$. See Figure \ref{commute}.  For general edge paths in $\Pa(\Sigma)$, a commutation of edges is a sequence of commutations performed on length two subpaths.  Note that if $\nu_0,\dots,\nu_p$ is a geodesic, then any path $\nu_0,\nu_1',\dots,\nu_{p-1}',\nu_p$ resulting from commutation of edges is also a geodesic with the same support as the original path.  For this reason, we will often abuse notation and suppress the prime notation, renaming the new path $\nu_0,\nu_1,\dots,\nu_{p-1},\nu_p$. \\

\begin{figure}[h!]
  \centering
    \includegraphics[width=.9\textwidth]{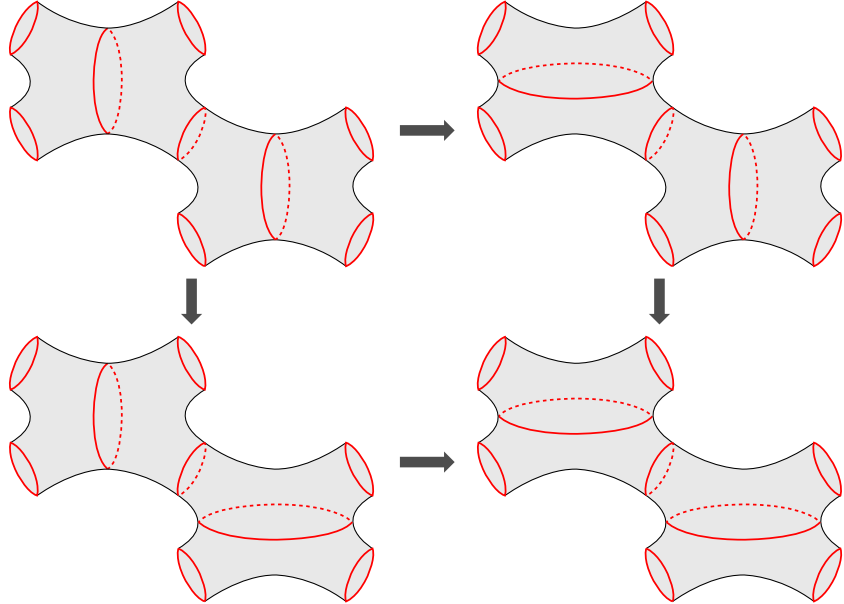}
    \caption{A commutation of edges in $\Pa(\Sigma)$.}
    \label{commute}
\end{figure}

Given a multicurve $Q$ in $\Sigma$, we may associate several different subsurfaces to $Q$.  Recall that the \emph{complementary} subsurface $Y$ of $Q$ is defined as the union of the components of $\Sigma \setminus \eta(Q)$ which are not pairs of pants (if $Q$ is a pants decomposition, then $Y = \emp$). Note that $Q$ uniquely defines $Y$, up to isotopy.  Using this terminology, the complementary subsurface of an $(n \X 1)$-multicurve is by definition a disjoint union of $n$ complexity one subsurfaces.  The support $X$ of a path $\nu_0,\dots,\nu_p$ in $\Pa(\Sigma)$ is the complementary subsurface of $\nu_0 \cap \nu_p$. \\

Let $Q$ be a multicurve in $\Sigma$ with complementary subsurface $Y$.  For any pants decomposition $\nu_Y$ of $Y$, we may associate a pants decomposition $\nu = \nu_Y \cup Q$ of $\Sigma$.  This yields a natural injection $i_Q: \Pa(Y) \rightarrow \Pa(\Sigma)$.  Recall that $\Pa_Q(\Sigma)$ is defined to be the full subgraph of $\Pa(\Sigma)$ spanned by those pants decompositions in $\Pa(\Sigma)$ which contain $Q$, and thus $\Pa_Q(\Sigma) = i_Q(\Pa(Y))$.  For two pants decompositions $\nu,\nu' \subset \Pa_Q(\Sigma)$, we denote their distance in $\Pa_Q(\Sigma)$ as $d_Q(\nu,\nu')$.  The main result in this paper is that $\Pa_Q(\Sigma)$ is totally geodesic in $\Pa(\Sigma)$ when $Q$ is an $(n\X 1)$-multicurve. \\

We will be examining the intersections of curves on $\Sigma$ with subsurfaces of $\Sigma$, and as such we must often deal with properly embedded essential arcs. Thus, we make several more definitions pertaining to arcs.  If $\A$ is a properly embedded essential arc in $\Sigma$ (that is, $\A$ is not isotopic rel $\pd$ into $\pd \Sigma$), we say $\A$ is a \emph{wave} if $\pd \A$ lies in a single component of $\pd \Sigma$ or a \emph{seam} if $\pd \A$ lies in different components of $\pd \Sigma$. When we wish to emphasize that we are interested in the isotopy class of an arc (or a collection of arcs) we use the notation $[ \alpha ]$.

\section{Subsurface projections}\label{subs}

A crucial tool in our proof of the main theorem is the projection of a pants decomposition $\nu$ of $\Sigma$ to a collection of curves in a disjoint union of complexity one subsurfaces.  This projection is a special case of the Masur-Minsky subsurface projections defined in \cite{MMII}. First, suppose that $Y$ is a connected subsurface of $\Sigma$ with $\xi(Y)=1$ and that $\alpha$ is a properly embedded essential arc in $Y$.  Define the projection of $\alpha$ to $Y$, denoted $\pi_Y(\alpha)$, to be the unique curve in $Y$ that misses $\alpha$. If $Y$ is $\Sigma_{1,1}$, then $\A$ is a wave and $\pd \eta(\A \cup \pd Y)$ has two components which are isotopic in $Y$. This curve is $\pi_Y(\A)$.  If $Y$ is $\Sigma_{0,4}$ and $\A$ is a wave, one component of $\pd \eta(\A \cup \pd Y)$ is isotopic into $\pd Y$ and the other component isotopic to $\pi_Y(\A)$.  Otherwise, $\A$ is a seam, $\pd \eta(\A \cup \pd Y)$ has precisely one component, which is essential in $Y$, and this component is $\pi_Y(\alpha)$.   See Figure \ref{project}.  Note that these projections depend only on the free isotopy classes of arcs in $Y$, and for a collection of arcs $Q$ in $Y$, let $[Q] = \{[\A]:\A \in Q\}$. \\

\begin{figure}[h!]
  \centering
    \includegraphics[width=.85\textwidth]{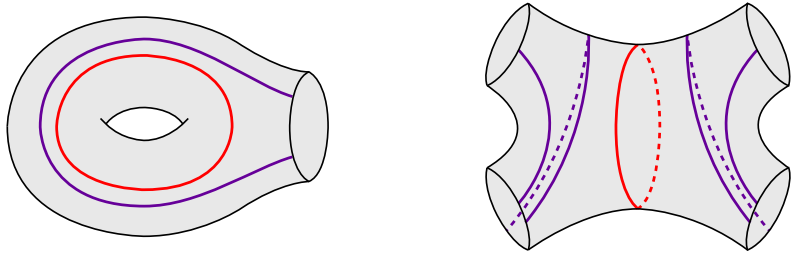}
    \caption{Arcs and their projection curves in $\Sigma_{1,1}$ and $\Sigma_{0,4}$.}
    \label{project}
\end{figure}

For a curve $\beta$ in $\Sigma$, the projection $\pi_Y(\beta)$ is defined as follows: first represent $\partial Y$ and $\beta$ by elements with minimal geometric intersection number (if $\beta$ is isotopic to a component of $\partial Y$ choose its representative to not intersect $Y$).  If $\beta \cap \partial Y = \emptyset$, then either $\beta \subset Y$ and we set $\pi_Y(\beta) =\{\beta\}$, or $\beta \subset \Sigma \setminus Y$ and we set $\pi_Y(\beta) =\emptyset$.  Otherwise, $\beta \cap Y = \{b_1, \ldots, b_k\}$ is a nonempty collection of essential arcs in $Y$ whose isotopy classes depend only on the isotopy class of $\beta$ and set 
$$\pi_Y(\beta) = \bigcup_{1\le i\le k } \pi_Y(b_i).$$
For a multicurve $\nu$, define $\pi_Y(\nu) = \bigcup \pi_Y(v_i)$, where the $v_i$ are the components of $\nu$.  If $\pi_Y(\nu) \neq \emptyset$ then we say $\nu$ \emph{meets} $Y$, and $\nu$ \emph{misses} or \emph{avoids} $Y$ otherwise.\\

Suppose now that $Y$ is the disjoint union of complexity one subsurfaces $Y_1,\dots,Y_n$ of $\Sigma$.  A pants decomposition $\nu$ of $\Sigma$ meets each $Y_i$, and thus we define
\[ \pi_Y(\nu) = \{(w_1,\dots,w_n)\in \Pa(Y): w_i \in \pi_{Y_i}(\nu)\}.\]
As such, we wish to characterize the distance $d_Y(\nu,\nu')$ in $Y$ between projections of $\nu$ and $\nu'$ in $\Pa(\Sigma)$.  However, $\pi_Y(\nu)$ is a set of pants decompositions of $Y$; hence we make the following definition:
\[ d_Y(\nu,\nu') = \max_{\mu \in \pi_Y(\nu)} \left\{ \min_{\mu' \in \pi_Y(\nu')} \left\{d_Y(\mu,\mu') \right\} \right\} \]
where $d_Y(\mu,\mu')$ denotes the distance between $\mu$ and $\mu'$ in $\Pa(Y)$.  If $Q$ and $Q'$ are multicurves or collections of arcs meeting each $Y_i$, we may define $d_Y(Q,Q')$ similarly.  We caution the reader that this distance is, in general, not symmetric.  It does, however, satisfy a triangle inequality: for multicurves $Q$, $Q'$, and $Q''$ meeting each $Y_i$, we have $d_Y(Q,Q'') \leq d_Y(Q,Q') + d_Y(Q',Q'')$.  In addition, we recall that if $\gamma$ is a curve contained in $Y_i$, then $\pi_{Y_i}(\gamma) = \{\gamma\}$; hence, if $Q$ is an $(n\X 1)$-multicurve with complementary subsurface $Y$ and $\nu,\nu' \in \Pa_Q(\Sigma)$, then $\pi_Y(\nu) = \{ \nu \cap \text{int}(Y)\}$ and $d_Y(\nu,\nu') = d_Q(\nu,\nu')$. \\

Note that for a complexity one surface $Y$, the pants graph $\Pa(Y)$ is a Farey graph, shown in Figure \ref{farey}.  If $Y$ is $\Sigma_{1,1}$, an arbitrary multicurve $Q$ in $\Sigma$ intersects $Y$ in at most three isotopy classes of arcs whose projections form a geodesic triangle in $\Pa(Y)$.  Thus, the diameter of $\pi_Y(Q)$ is at most one.  If $Y$ is $\Sigma_{0,4}$, the situation is only slightly more complicated, as described below.

\begin{figure}[h!]
  \centering
    \includegraphics[width=.50\textwidth]{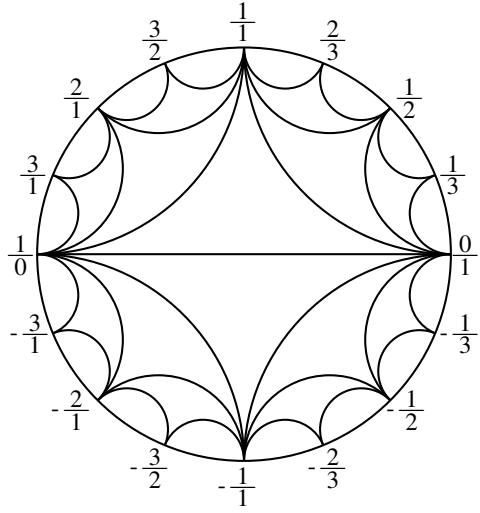}
    \caption{Part of a Farey graph, with edges realized as geodesics in the disk model of $\mathbb{H}^2$.  Two vertices $a/b$ and $c/d$ are connected by an edge whenever $|ad-bc| = 1$.}
    \label{farey}
\end{figure}

\begin{lemma}\label{diam}
Given a multicurve $Q$ in $\Sigma$ and a 4-punctured sphere $Y \subset \Sigma$, the diameter of $\pi_Y(Q)$ is no more than two.  Further, disjoint arcs $\A$ and $\A'$ in $Y$ satisfy $d_Y(\pi_Y(\A),\pi_Y(\A'))=2$ if and only if $\A$ and $\A'$ are nonisotopic seams with boundary in the same two components of $\pd Y$.
\begin{proof}
Let $\A$ and $\A'$ be disjoint nonisotopic arcs in $Y$.  Each component of $\pd Y$ common to $\A$ and $\A'$ contributes at most two points of intersection to $\pi_Y(\A) \cap \pi_Y(\A')$.  Thus, $d(\pi_Y(\A),\pi_Y(\A')) \leq 1$ unless $\A$ and $\A'$ have boundary in two common boundary components.  We may verify that in this case, $|\pi_Y(\A) \cap \pi_Y(\A')| = 4$; thus $d_Y(\pi_Y(\A),\pi_Y(\A')) = 2$ (see Figure \ref{project2}).  If $Q$ is a multicurve in $\Sigma$, then arcs of $Q \cap Y$ are pairwise disjoint, completing the proof.
\begin{figure}[h!]
  \centering
    \includegraphics[width=.4\textwidth]{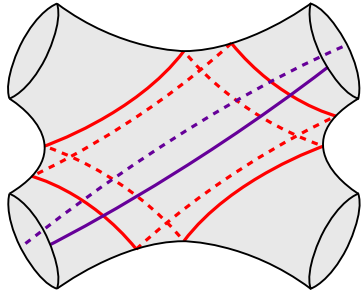}
    \caption{Two disjoint seams in $\Sigma_{0,4}$ that project to curves intersecting four times.}
    \label{project2}
\end{figure}
\end{proof}
\end{lemma}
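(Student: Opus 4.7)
The plan is to reduce the diameter bound to a local intersection count for projections of pairs of disjoint essential arcs in $Y$. Since $Q$ is a multicurve, the components of $Q\cap Y$ are pairwise disjoint essential arcs in $Y$, so it suffices to prove that $d_Y(\pi_Y(\A), \pi_Y(\A')) \le 2$ for any two disjoint nonisotopic essential arcs $\A, \A' \subset Y$, with equality only in the stated seam configuration.

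The key step is a local intersection count near $\pd Y$. By the definition of the projection, $\pi_Y(\A)$ is (isotopic to) the essential component of $\pd \eta(\A \cup \pd Y)$, so it may be realized inside $\eta(\A \cup \pd Y)$; likewise for $\pi_Y(\A')$. Because $\A \cap \A' = \emp$, every point of $\pi_Y(\A) \cap \pi_Y(\A')$ lies in an annular neighborhood of a boundary component of $Y$ met by both arcs. A direct local inspection --- the projection curve runs parallel to the boundary component with a small detour across each endpoint of the arc --- shows that each such common boundary component contributes at most two intersection points to $\pi_Y(\A) \cap \pi_Y(\A')$.

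Since an essential arc in $Y = \Sigma_{0,4}$ has exactly two endpoints, it meets at most two of the four boundary components of $Y$, so $|\pi_Y(\A) \cap \pi_Y(\A')| \le 4$. Translating intersection number to Farey distance: distinct curves intersecting twice are adjacent in $\Pa(Y)$, and distinct curves intersecting four times share a common Farey neighbor, so in all cases $d_Y(\pi_Y(\A), \pi_Y(\A')) \le 2$. Equality forces the full four intersections, which by the local count occurs only when both $\A$ and $\A'$ have one endpoint on each of two common boundary components of $Y$ --- that is, precisely when $\A$ and $\A'$ are nonisotopic seams joining the same pair of components of $\pd Y$ (cf. Figure \ref{project2}).

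The main obstacle is the local pictorial count of intersections near a shared boundary component and verifying that the upper bound is sharp exactly in the seam--seam configuration; the rest of the argument is bookkeeping about how many boundary components an arc in $\Sigma_{0,4}$ can meet and a standard translation from intersection number to distance in the Farey graph.
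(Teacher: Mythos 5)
Your proposal is correct and follows essentially the same argument as the paper: bound the intersections of the projection curves by a local count of at most two intersection points per boundary component shared by the two disjoint arcs, conclude at most four intersections total (hence Farey distance at most two), and identify the equality case as two nonisotopic seams joining the same pair of boundary components, with the multicurve hypothesis reducing everything to pairwise disjoint arcs.
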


We require several technical lemmas before proceeding to the proof of the main points.  From here on, we will let $\cci( \cdot)$ denote $-\chi(\cdot)$.  Suppose that $X$ and $Y$ are subsurfaces of $\Sigma$, with $X \cap Y \neq \emp$. We will always assume that these surfaces have been isotoped so that $\partial X$ and $\partial Y$ intersect minimally.  In this context, we call $X \cap Y$ (or one of its components) a \emph{cornered subsurface}.  There is a cell decomposition of $X$ induced by $X \cap \partial Y$ containing $X \cap Y$ as a subcomplex, and we may count the contribution of $X \cap Y$ to $\cci(X)$ in this cell decomposition.  We define
$$\cci_X(Y) =\cci(X \cap Y) - \frac{1}{2}\cci(\text{Fr}_X{Y}),$$
where $\text{Fr}_X{Y}$ (the \emph{frontier} of $Y$ in $X$) denotes $X \cap \partial Y$. \\

Observe that the boundary components of a cornered subsurface are either curves contained in $\pd X$ or $\pd Y$ or $2n$-gons consisting of arcs which alternate between $\text{Fr}_X(Y)$ and $\text{Fr}_Y(X)$.  In addition, a cornered subsurface may not have any ``corners;" that is, all boundary components may curves.  Two particular cornered subsurfaces will be most relevant: If a component $A \subset X \cap Y$ is an annulus such that one component of $\pd A$ is contained in $X$ or $Y$ and the other component is a rectangle, we say $A$ is a \emph{rectangular annulus}.  Similarly, If $P \subset X \cap Y$ is a pair of pants such that  two components of $\pd P$ are contained in $X$ or $Y$ and the other component is a rectangle, we say $P$ is a \emph{rectangular pair of pants}.  Note that $\cci_X(A) = 1$ and $\cci_X(P) = 2$.  See Figure \ref{rectang}.

\begin{figure}[h!]
  \centering
    \includegraphics[width=.6\textwidth]{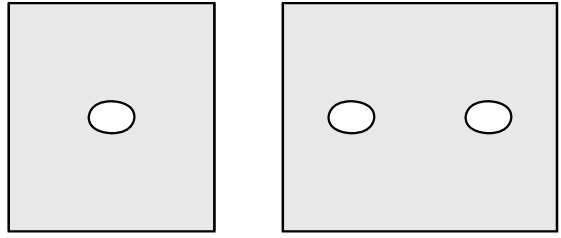}
    \caption{A rectangular annulus and a rectangular pair of pants.}
    \label{rectang}
\end{figure}

\begin{lemma}\label{corner}
For subsurfaces $X$ and $Y$ of $\Sigma$, we have $\cci_X(Y) \leq \cci(X)$.
\begin{proof}
Consider a cell decomposition of $X$ induced by $X \cap \partial Y$ containing $X \cap Y$ as a subcomplex, and let $\{Z_i\}$ denote the closures of the components of $X \setminus \pd Y$.  Thus, each $Z_i$ inherits a cell decomposition,  and either $Z_i \subset X \cap Y$ or $Z_i \subset X \setminus Y$.  Let $f$ denote the number of faces in this decomposition, $v_F$ and $e_F$ the numbers of vertices and edges (respectively) contained in $\text{Fr}_X(Y)$, and $v_Z$ and $e_Z$ the numbers of vertices and edges (respectively) not contained in $\text{Fr}_X(Y)$.  Note that if we compute $\sum \cci(Z_i)$ and $\sum \cci(\text{Fr}_X Z_i)$, vertices and edges contained in $\text{Fr}_X(Y)$ are counted twice, while faces and all other vertices and edges are counted once.  Thus
\begin{eqnarray*}
\sum \cci_X(Z_i) &=& \sum \cci(Z_i) - \frac{1}{2} \sum \cci(\text{Fr}_X Z_i) \\
&=& (-2v_F - v_Z + 2e_F + e_Z - f) -\frac{1}{2}(-2v_F + 2e_F) \\
&=& -v_F - v_Z + e_F + e_Z - f \\
&=& \cci(X).
\end{eqnarray*}
Requiring that $X$ and $Y$ are essential subsurfaces and $|\pd X \cap \pd Y|$ is minimal up to isotopy ensures that $\cci_X(Z_i) \geq 0$ for all $i$.  It follows that
\[ \cci_X(Y) = \sum_{Z_i \subset Y} \cci_X(Z_i) \leq \sum \cci_X(Z_i) \leq \cci(X),\]
as desired.
\end{proof}
\end{lemma}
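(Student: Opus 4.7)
The plan is to cut $X$ along the $1$-submanifold $X \cap \partial Y$, show that the ``cornered'' Euler characteristic is additive over the resulting pieces, and then check that each piece contributes nonnegatively. Write $\{Z_i\}$ for the closures of the components of $X \setminus \partial Y$; each $Z_i$ is contained in either $X \cap Y$ or $X \setminus Y$, and the subfamily $\{Z_i : Z_i \subseteq Y\}$ reassembles $X \cap Y$ together with its frontier in $X$.

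The first step is to establish the additivity identity
$$\sum_i \overline{\chi}_X(Z_i) \; = \; \overline{\chi}(X).$$
This is a double-counting argument. Fix a cell decomposition of $X$ containing $X \cap \partial Y$ as a subcomplex. In $\sum_i \overline{\chi}(Z_i)$ each face appears once, but each vertex or edge on $\mathrm{Fr}_X Y$ appears twice, once for the piece on either side. The sum $\sum_i \overline{\chi}(\mathrm{Fr}_X Z_i)$ counts precisely those doubled cells, so the $-\tfrac12 \overline{\chi}(\mathrm{Fr}_X Z_i)$ correction in the definition of $\overline{\chi}_X$ removes the overcount exactly, leaving $\overline{\chi}(X)$.

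The main step is to show $\overline{\chi}_X(Z_i) \geq 0$ for every $i$. If $\overline{\chi}(Z_i) \geq 0$, nonnegativity is automatic, since $\mathrm{Fr}_X Z_i$ is a $1$-manifold and thus $\overline{\chi}(\mathrm{Fr}_X Z_i) \leq 0$. The only remaining type is a disk piece, for which $\overline{\chi}(Z_i) = -1$ and $\overline{\chi}_X(Z_i) = -1 + k/2$, where $k$ is the number of arcs of $Z_i \cap \partial Y$ on $\partial Z_i$. Essentiality of $X$ rules out $k = 0$, since then a component of $\partial X$ would bound a disk in $\Sigma$; minimality of $|\partial X \cap \partial Y|$ rules out $k = 1$, since this is precisely an innermost bigon. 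Hence $k \geq 2$ and $\overline{\chi}_X(Z_i) \geq 0$.

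Combining the two steps,
$$\overline{\chi}_X(Y) \; = \; \sum_{Z_i \subseteq Y} \overline{\chi}_X(Z_i) \; \leq \; \sum_i \overline{\chi}_X(Z_i) \; = \; \overline{\chi}(X),$$
which is the desired inequality. The main obstacle in this plan is the case analysis in the third step: one must pinpoint exactly which small pieces could in principle violate nonnegativity and invoke the correct topological hypothesis for each (essentiality for closed disk components, minimality for bigons). The additivity identity itself is a routine piece of cellular bookkeeping once the decomposition is in hand.
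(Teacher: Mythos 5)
Your proposal is correct and follows essentially the same route as the paper: the same decomposition of $X$ into the closures $Z_i$ of the components of $X \setminus \partial Y$, the same double-counting identity $\sum_i \cci_X(Z_i) = \cci(X)$, and then nonnegativity of each $\cci_X(Z_i)$. In fact you supply more detail than the paper on the nonnegativity step (the paper only asserts it from essentiality and minimality), and your disk/bigon analysis is the right justification, with the minor remark that a $k=0$ disk could also have boundary a circle of $\partial Y$, which is excluded by essentiality of $Y$ rather than of $X$.
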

From the proof of the lemma, we may also conclude that $\cci_X(Y) = \cci(X)$ if and only if $\cci_X(X \setminus \bigcup \eta(Y)) = 0$.  See Figure \ref{corn}. \\

\begin{figure}[h!]
  \centering
    \includegraphics[width=.4\textwidth]{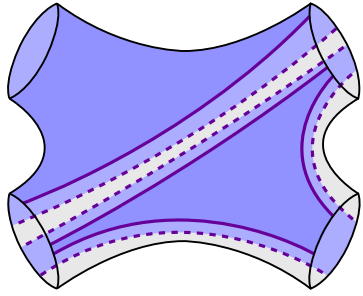}
    \caption{A 4-punctured sphere $X$ containing a rectangular annulus $D_1$ and two hexagons $D_2$ and $D_3$.  Note that $\cci(X) = \cci_X(D_1) + \cci_X(D_2) + \cci_X(D_3)$ in this example.}
    \label{corn}
\end{figure}

In the next two lemmas, we examine the contribution of a complexity one subsurface $Y$ to the Euler characteristic of another subsurface $X$ under some assumptions on the distance from $\pi_Y(c)$ to $\pi_Y(\pd X)$ for a curve $c$ in $X$.  These lemmas will later be used to compare the support $X$ of a path in $\Pa(\Sigma)$ to the complementary subsurface $Y$ of an $(n \X 1)$-multicurve.



\begin{lemma}\label{cont1}
Suppose that $X$ and $Y$ are subsurfaces of $\Sigma$, with $\xi(Y)=1$ and $\pd X \cap Y \neq \emp$.  Let $c$ be a curve contained in $X$ which meets $Y$.  If $d_Y(c,\pd X) = 1$, then there is a cornered component $D$ of $X\cap Y$ such that $\cci_X(D) \geq 1$.
\end{lemma}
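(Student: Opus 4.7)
The strategy is to locate a single arc $\alpha$ of $c \cap Y$ whose $Y$-projection realizes the distance one in the hypothesis, and then argue that the cornered component $D$ of $X \cap Y$ containing $\alpha$ satisfies $\cci_X(D) \geq 1$. Since $d_Y(c,\pd X) = 1 > 0$, the definition of $d_Y$ produces an arc $\alpha$ of $c \cap Y$ for which $\pi_Y(\alpha) \notin \pi_Y(\pd X)$, so $\alpha$ is not freely isotopic in $Y$ to any arc of $\pd X \cap Y$. As $\alpha \subset c \subset X$ and $\alpha \subset Y$, the arc lies in a single component $D$ of $X \cap Y$, with endpoints on $\text{Fr}_X(Y) \cap D$.

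The central case is when $D$ is a topological disk. Then $\pd D$ is a $2n$-gon whose sides alternate between $n$ arcs of $\pd X$ (``$X$-sides'') and $n$ arcs of $\pd Y$ (``$Y$-sides''). Essentiality of $\alpha$ in $Y$ forces its endpoints onto two distinct $Y$-sides: otherwise $\alpha$ together with a subarc of a single $Y$-side would bound a disk in $D$, making $\alpha$ peripheral. These endpoints split $\pd D$ into two subarcs $\beta_1, \beta_2$, each cobounding a subdisk of $D$ with $\alpha$. Pushing $\alpha$ across such a subdisk and then past each $Y$-side appearing in the subarc produces a free isotopy in $Y$ from $\alpha$ to an arc running through the $X$-sides of that subarc. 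If either $\beta_i$ contained only a single $X$-side $e$, this isotopy would identify $\alpha$ with $e$ in $Y$, contradicting the choice of $\alpha$. Hence both $\beta_1$ and $\beta_2$ have at least two $X$-sides, so $n \geq 4$. Computing, $\cci_X(D) = \cci(D) - \frac{1}{2}\cci(\text{Fr}_X D) = -1 + n/2 \geq 1$.

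When $D$ is not a disk, the conclusion follows essentially from definitions. For a cornered annulus, the minimum of $\cci_X$ over configurations with at least one corner is $1$, realized by a rectangular annulus, and additional corners or genus only increase this value. No-corner configurations have $\pd D$ lying entirely in $\pd X$ or entirely in $\pd Y$, and a brief check (using $\xi(Y)=1$, so $Y$ is $\Sigma_{1,1}$ or $\Sigma_{0,4}$) rules out such components as containers of essential arcs of $Y$ with endpoints on $\text{Fr}_X(Y)$. Any component of greater topological complexity, such as a cornered pair of pants, has $\cci_X \geq 1$ automatically by the accounting behind Lemma \ref{corner}.

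The main technical hurdle is in the disk case: one must carefully verify that pushing $\alpha$ across a subdisk and past each $Y$-side is indeed a free isotopy of arcs in $Y$ (not merely in $D$), and that when a subarc contains a single $X$-side the resulting arc cobounds a rectangular strip in $Y$ with that $X$-side. The standing minimality assumption on $|\pd X \cap \pd Y|$ forbids any bigons that could otherwise obstruct this identification, so the pushing argument proceeds cleanly and the counting of $X$-sides gives the desired lower bound.
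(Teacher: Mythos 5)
Your reduction to a single arc $\alpha$ of $c \cap Y$ with $\pi_Y(\alpha) \notin \pi_Y(\pd X)$, and your treatment of the case where the component $D$ containing $\alpha$ is a disk, are correct and amount to the paper's key observation: if $D$ were a rectangle or hexagon, $\alpha$ would be isotopic in $Y$ to an arc of $\pd X \cap Y$, so $D$ is a $2n$-gon with $n \geq 4$ and $\cci_X(D) = -1 + n/2 \geq 1$. The gap is in the non-disk case, and it is precisely the case that carries the real content of the lemma. Your assertion that the minimum of $\cci_X$ over cornered annuli is $1$, realized by a rectangular annulus, is false: an annulus $D$ whose cornered boundary circle consists of a single arc of $\pd X$ and a single arc of $\text{Fr}_X(Y)$ (two corners), with its other boundary component a curve, has $\cci_X(D) = 0 - \frac{1}{2}(-1) = \frac{1}{2} < 1$. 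So if the distinguished arc $\alpha$ lies in such a component, your argument yields nothing, and nothing in your proposal rules this out.

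Excluding that configuration is not a matter of definitions; it genuinely uses the hypothesis $d_Y(c,\pd X) = 1$, and this is where the paper's proof does its work. Writing $\gamma$ for the curve boundary component of such an annulus and $\A \subset \pd X$ for its wave side: if $\gamma$ is essential in $Y$, then every arc of $c \cap Y$ and of $\pd X \cap Y$ is disjoint from $\gamma$, so (since $\xi(Y) = 1$) $\pi_Y(c) = \pi_Y(\pd X) = \{\gamma\}$ and $d_Y(c,\pd X) = 0$, a contradiction; if instead $\gamma$ is parallel into $\pd Y$, then $|\pd Y| > 1$ forces $Y = \Sigma_{0,4}$, and one checks that every essential arc of $c \cap D$ projects to the same curve as $\pi_Y(\A) \in \pi_Y(\pd X)$, again giving $d_Y(c,\pd X) = 0$. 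An argument of this kind must be supplied for your proof to be complete. (Your dismissal of the remaining non-disk components is fine: components of higher complexity have $\cci_X \geq \cci \geq 1$, and annuli with two or more frontier arcs contribute at least $2 \cdot \frac{1}{2}$.) Note also that, contrary to your closing remark, the pushing argument in the disk case is the routine part; the half-integer annulus is the main technical hurdle.
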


\begin{proof}
The key observation here is that if every component $D$ of $X \cap Y$ is a rectangle or a hexagon, then $c$ satisfies $\pi_Y(c) \subset \pi_Y(\pd X)$, as any arc of $c \cap D$ is isotopic in $Y$ to an arc in $\pd X \cap D$.  Hence, if all components of $X \cap Y$ are $2n$-gons, there is a $2n$-gon $D$ with $n \geq 4$; hence $\cci_X(D)  \geq 1$.  See Figure \ref{hexagon}. \\

\begin{figure}[h!]
  \centering
    \includegraphics[width=.7\textwidth]{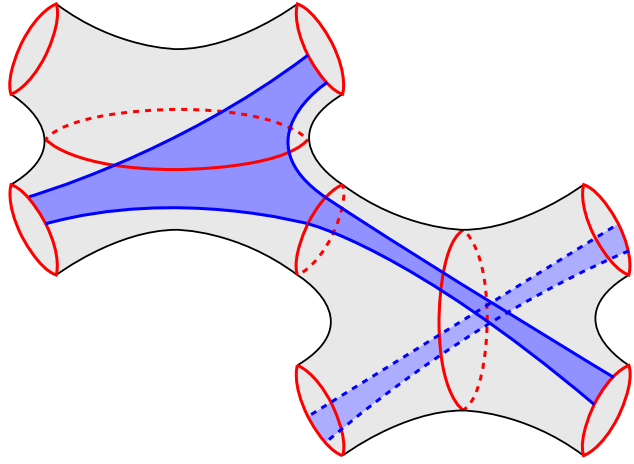}
    \caption{Components $D_1$ and $D_2$ of $X \cap Y$ in $X$, where $D_1$ is a hexagon and $D_2$ is a rectangle.  For an interior curve $c$, any component of $c \cap D_i$ is parallel in $Y$ to a component of $\pd X \cap Y$.}
    \label{hexagon}
\end{figure}

If there is a component $D$ of $X \cap Y$ which is not a topological disk, then $D$ is a punctured sphere or torus, and if $D$ has three or more punctures or is not planar, then $\cci_X(D) \geq \cci(D) \geq 1$.  Thus, suppose that $D$ is an annulus.  If $\pd D$ contains two or more arcs in $\text{Fr}_X(Y)$, then $\cci_X(D) \geq 2 \cdot \frac{1}{2}$.  Otherwise, one component of $\pd D$ is the union of an arc in $\text{Fr}_X(Y)$ and a wave $\A \subset Y$ and the other component is a curve $\gamma$ in $X$.  If $\gamma$ is essential in $Y$, then $\pi_Y(c)=\pi_Y(\pd X) = \{\gamma\}$, a contradiction.  Thus, $\gamma$ is isotopic into $\pd Y$, and it follows that $|\pd Y| >1$, so $Y$ is a 4-punctured sphere.  However, if $c$ intersects $D$, then each component $\n$ of the intersection $c \cap D$ is an essential arc with the property that $\pi_Y(\A) = \pi_Y(\n)$ and $d_Y(c, \pd X) =0$, a contradiction.  See Figure \ref{contrib}.
\begin{figure}[h!]
  \centering
    \includegraphics[width=.4\textwidth]{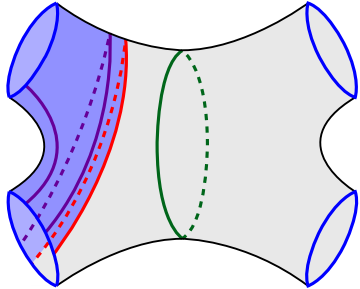}
    \caption{A depiction in $Y$ of the case in which $D$ is an annulus and $\pd D$ contains one arc of $\text{Fr}_X(Y)$.  Note that all arcs pictured project to the same curve in $Y$.}
    \label{contrib}
\end{figure}
\end{proof}

\begin{lemma}\label{cont2}
Suppose that $X$ and $Y$ are subsurfaces of $\Sigma$ with $Y$ a 4-punctured sphere and $\pd X \cap Y \neq \emp$.  Let $c$ be a curve contained in $X$.  If $d_Y(c,\pd X) =2$, then there is a cornered component $D$ of $X \cap Y$ which is a rectangular pair of pants.
\begin{proof}
By Lemma \ref{diam}, some arc $\A$ of $c \cap Y$ is a seam, and every arc of $\pd X \cap Y$ is a seam not isotopic to $\A$ but with endpoints on the same two components $\gamma_1$ and $\gamma_2$ of $\pd Y$ as $\A$.  Any two distinct classes of arcs which are disjoint from $\A$ with endpoints on $\gamma_1$ and $\gamma_2$ must intersect, and thus all arcs of $\pd X \cap Y$ are isotopic in $Y$.  Cutting $Y$ along the arcs of $\pd X \cap Y$ yields some number of rectangles and the desired component $D$.
\end{proof}
\end{lemma}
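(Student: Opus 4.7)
The plan is to unpack the distance hypothesis via Lemma \ref{diam}, use it to constrain the combinatorics of $\pd X \cap Y$, and then find the rectangular pair of pants by cutting $Y$ along $\pd X$.

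First, choose an arc $\alpha$ of $c \cap Y$ that realizes the outer maximum in the definition of $d_Y(c,\pd X)$, so that $\pi_Y(\alpha)$ is at distance $2$ from every element of $\pi_Y(\pd X)$. Since $c$ is disjoint from $\pd X$, each arc $\beta$ of $\pd X \cap Y$ is disjoint from $\alpha$, so Lemma \ref{diam} forces each such $\beta$ to be a seam joining the same two components $\gamma_1, \gamma_2$ of $\pd Y$ as $\alpha$, and not isotopic to $\alpha$.

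The next step is to show that all arcs of $\pd X \cap Y$ lie in a single isotopy class. Suppose $\beta_1, \beta_2$ were non-isotopic arcs of $\pd X \cap Y$. Cut $Y$ along $\alpha$; this yields a pair of pants $P$ with boundary components $\gamma_1 \cup \alpha^{\pm} \cup \gamma_2$, $\gamma_3$, and $\gamma_4$, in which each $\beta_i$ becomes an essential wave based at the reconstituted boundary. Because $\beta_1$ and $\beta_2$ are already disjoint in $P$ and $P$ is planar, they cobound an annulus (together with an arc of the reconstituted boundary), so they are isotopic, a contradiction.

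Now cut $Y$ along the $n$ parallel seams comprising $\pd X \cap Y$. The complement consists of $n-1$ rectangles, each bounded by two consecutive seams and arcs of $\gamma_1, \gamma_2$, together with one remaining piece $D$ that contains $\gamma_3$ and $\gamma_4$ and has a single outer boundary composed of two seams and two $\pd Y$-arcs: a rectangular pair of pants. Finally, $\alpha$ must lie in $D$, since any essential arc in one of the rectangles running from the $\gamma_1$-side to the $\gamma_2$-side is isotopic in $Y$ to one of the parallel boundary seams, contradicting the choice of $\alpha$. Since $\alpha \subset c \subset X$, the component $D$ is contained in $X$, providing the desired rectangular pair of pants in $X \cap Y$.

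The step likely to need the most care is the isotopy classification of $\pd X \cap Y$; once that is secured, the cutting argument and the identification of which component contains $\alpha$ are essentially planar bookkeeping.
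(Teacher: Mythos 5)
Your argument is essentially the paper's: Lemma \ref{diam} forces every arc of $\partial X \cap Y$ to be a seam joining the same two components $\gamma_1,\gamma_2$ of $\partial Y$ as a seam $\alpha$ of $c\cap Y$ and nonisotopic to $\alpha$; one then shows all such arcs lie in a single isotopy class and cuts along them to exhibit the rectangular pair of pants, which lies in $X$ because it contains $\alpha \subset c \subset X$ (this last point, which the paper leaves implicit, you spell out nicely). The one place you diverge is the middle step: the paper simply asserts that distinct classes of arcs disjoint from $\alpha$ with endpoints on $\gamma_1,\gamma_2$ must intersect, while you cut $Y$ along $\alpha$ and argue in the resulting pair of pants $P$. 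That route is fine, but the sentence ``disjoint in a planar surface, hence they cobound an annulus, hence isotopic'' is where the actual content lives and, as stated, it does not suffice: all essential waves of $P$ based on the reconstituted boundary $\partial_0$ form a single class when endpoints may slide freely along $\partial_0$, but a parallelism in $P$ that slides an endpoint across one of the two copies $a^{\pm}$ of $\alpha$ does not descend to an isotopy in $Y$, which is what the contradiction requires. The fix is short and uses exactly the feature you mention but do not exploit, namely that each $\beta_i$ joins the $\gamma_1$-portion $I_1$ of $\partial_0$ to the $\gamma_2$-portion $I_2$: if $\beta_1$ and $\beta_2$ induced different partitions of $\{\gamma_3,\gamma_4,a^+,a^-\}$, then the component of $P\setminus \beta_1$ not containing $\beta_2$ would be a connected set, disjoint from $\beta_2$, containing points (one of $a^{\pm}$ and one of $\gamma_3,\gamma_4$) forced to lie on opposite sides of $\beta_2$, a contradiction; so disjointness forces the same partition, the region between $\beta_1$ and $\beta_2$ is a hole-free rectangle meeting $\partial_0$ only in subarcs of $I_1$ and $I_2$, and the resulting parallelism is an honest isotopy in $Y$, contradicting nonisotopy. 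With that observation inserted, your proof is complete and matches the paper's in structure, while supplying a justification for the intersection claim that the paper states without proof.
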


We present three final technical lemmas before proceeding to the proof of the main theorems.

\begin{lemma}\label{intersections}
Let $P$ be a pair of pants and $Z$ a subsurface of $\Sigma$. Let $D$ be a cornered component of their intersection. If $D$ is an octagon then $D$ intersects all three components of $\partial P$, and if $D$ is a rectangular annulus then $D$ intersects two components of $\partial P$.
\end{lemma}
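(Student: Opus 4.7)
The plan is to apply the equality clause of Lemma~\ref{corner}. Since an octagon and a rectangular annulus both satisfy $\overline{\chi}_P(D)=1=\overline{\chi}(P)$, every other component of the decomposition of $P$ induced by $\partial Z \cap P$ has $\overline{\chi}_P=0$, and so must be a rectangle or a smooth annulus. This strong control on the complement will underwrite both halves of the argument.

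For the octagon, assume toward contradiction that the four $\partial P$-sides of $D$ lie in only two components of $\partial P$, say $c_1$ and $c_2$. Then the four $\partial Z$-sides are pairwise disjoint essential arcs of $P$ with endpoints in $c_1 \cup c_2$. In a pair of pants, such arcs fall into only three isotopy classes: the waves on $c_1$, the waves on $c_2$, and the seams from $c_1$ to $c_2$; moreover, an essential wave on $c_1$ cannot be disjoint from an essential wave on $c_2$ (the latter would be trapped in the annulus cut off by the former, making it inessential). This leaves only a handful of configurations: four parallel seams; two parallel waves at one $c_i$ together with two parallel seams; or four parallel waves at a single $c_i$. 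In each configuration, cutting $P$ along these four arcs produces a decomposition in which no region has all four arcs on its boundary (at most three, as in the hexagonal component of the mixed wave-seam configuration). Since any subregion of such a component inherits at most as many frontier arcs, $D$ cannot realize all four arcs as its $\partial Z$-sides, a contradiction.

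For the rectangular annulus, $D$ has one smooth boundary circle (entirely in $\partial P$ or in $\partial Z$) and a 4-gon boundary with two $\partial P$-arcs and two $\partial Z$-arcs. If the smooth boundary is some $c_i \subset \partial P$, then the two $\partial P$-arcs of the 4-gon cannot lie on $c_i$ since the two boundary circles of the annulus are disjoint, so $D$ meets $c_i$ together with another component. If instead the smooth boundary is an essential $\partial Z$-curve $\gamma$, then $\gamma$ is peripheral in $P$ and so parallel to some $c_k$; through $D$ the 4-gon is isotopic to $\gamma$ and hence parallel to $c_k$. If both $\partial P$-arcs of the 4-gon lay on a single $c_i$, then its two $\partial Z$-arcs would be essential waves on $c_i$ and the parallelism to $c_k$ would force $k=i$; but no essential wave on $c_i$ is disjoint from a curve parallel to $c_i$ (such an arc inside the collar is inessential in $P$, and it cannot lie outside since its endpoints are on $c_i$). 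So in either sub-case the 4-gon's $\partial P$-arcs span at least two components.

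The main obstacle I expect is the octagon's case enumeration: for each admissible arrangement of four disjoint essential arcs one must carefully identify the decomposition of $P$ and verify that the maximum number of these arcs on the boundary of any single region is strictly less than four. The mixed wave-seam configuration is the delicate case, yielding a hexagonal region with exactly three arcs on its boundary.
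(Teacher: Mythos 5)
Your rectangular-annulus argument has a genuine gap in precisely the sub-case that needs the most care: the smooth boundary of $D$ is a $\partial Z$-curve $\gamma$ and both $\partial P$-arcs of the $4$-gon lie on a single component $c_i$. You claim that ``the parallelism to $c_k$ would force $k=i$'' and then contradict $k=i$ using the (correct) fact that an essential wave on $c_i$ must meet every curve parallel to $c_i$. But the forcing step is false: a simple closed curve made of two disjoint essential waves on $c_i$ and two arcs of $c_i$ is isotopic either to a trivial curve or to one of the \emph{other} two boundary components of $P$ (it is a boundary circle of a regular neighborhood of $c_i$ union a wave), never to $c_i$. Since $\gamma$ is essential, this sub-case automatically has $k\neq i$, so your contradiction never applies and the configuration is not excluded by what you wrote. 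It is excluded, but for a different reason: the two $\partial Z$-sides would be parallel waves on $c_i$, and already a single such wave separates $P$ into two annuli, so the component of $P\cap Z$ adjacent to $\gamma$ has only one of the two waves in its frontier and hence cannot be the rectangular annulus $D$. Alternatively, argue as the paper does: if $D$ avoids two boundary circles of $P$, the component $C$ of $P\setminus \eta(D)$ containing a boundary circle not parallel to the curve boundary of $D$ has at least one frontier arc, so $\cci_P(C)>0$, and then $\cci_P(D)+\cci_P(C)>1=\cci(P)$ contradicts the additivity established in the proof of Lemma \ref{corner}.

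Your octagon half takes a genuinely different route from the paper, which handles both cases uniformly with the Euler-characteristic count just described (if the octagon $D$ misses a boundary circle $p$, the component of $P\setminus\eta(D)$ containing $p$ contributes positively, overrunning $\cci(P)=1$). Your combinatorial route is viable: each arc of $\partial Z\cap P$ meets $Z$ on one side only, so the four $\partial Z$-sides of $D$ are four distinct disjoint essential arcs, and you correctly observe that waves on $c_1$ and waves on $c_2$ cannot be disjoint, so all four arcs are parallel seams and/or parallel waves at one $c_i$. However, your list of configurations is not exhaustive: besides four seams, two seams with two waves, and four waves, you must also treat three seams with one wave and one seam with three waves. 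The same inspection (no complementary region is adjacent to all four arcs) disposes of these, so this is an omission rather than a failure, but as written the enumeration does not cover all cases. Finally, your opening appeal to the equality case of Lemma \ref{corner} is never used afterwards; pushed slightly further, that observation \emph{is} the paper's proof, and it is both shorter and immune to the case analysis where your annulus argument currently breaks down.
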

\begin{proof}
First let $D$ be an octagon. Suppose by way of contradiction that $D$ avoids a boundary component $p$ of $P$.  Then $P \setminus \eta(D)$ has a component $C$ which contains $p$.  It follows that $\cci(P) \geq \cci_P(D) + \cci_P(C) > 1 + 0$, a contradiction to Lemma \ref{corner}. \\

Now let $D$ be a rectangular annulus. First, note that the curve boundary component of $D$ is isotopic into $\pd P$, so $D$ can intersect at most two components of $\pd P$.  Suppose by way of contradiction that $D$ avoids two boundary components of $P$.  Then one of them, call it $p$, is not isotopic to the curve boundary component of $D$, so $P \setminus \eta(D)$ has a component $C$ which contains $p$.  It follows that $\cci(P) \geq \cci_P(D) + \cci_P(C) > 1 + 0$, a contradiction.  
\end{proof}

\begin{lemma}\label{seam1}
Let $X \subset \Sigma$ be a subsurface containing two curves $\gamma_1$ and $\gamma_2$.  Suppose $Y$ is a 4-punctured sphere such that $X \cap Y$ contains a rectangular annulus $A$, where both $\gamma_1$ and $\gamma_2$ meet $A$.  Then
\[ \max \left\{ d_Y(w_1,w_2): w_i \in \pi_Y(\gamma_i \cap A) \right\} \leq 2.\]
\begin{proof}
Up to isotopy, there are up to four possible seams and two possible waves contained in $\gamma_i \cap A$.  See Figure \ref{annrect}.  Let $\A,\n \subset \pd A$ denote the arcs contained in $\pd X$.  By Lemma \ref{diam}, $d_Y(\pi_Y(\A),\pi_Y(\n)) = 2$, and for any arc $\delta_i \subset \gamma_i \cap A$ which is not isotopic to $\A$ or $\n$, we have $d_Y(\pi_Y(\delta_i),\pi_Y(\A)) \leq 1$.  Thus for any two such arcs $\delta_1 \subset \gamma_1 \cap A$ and $\delta_2 \subset \gamma_2 \cap A$,
\[ d_Y(\pi_Y(\delta_1),\pi_Y(\delta_2)) \leq d_Y(\pi_Y(\delta_1),\pi_Y(\A)) + d_Y(\pi_Y(\A),\pi_Y(\delta_2)) \leq 2.\]
\begin{figure}[h!]
  \centering
    \includegraphics[width=.37\textwidth]{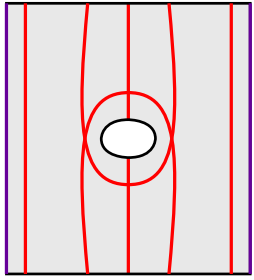}
    \caption{The six possibilities for arcs of $\gamma_i \cap A$ in $A$.}
    \label{annrect}
\end{figure}
\end{proof}
\end{lemma}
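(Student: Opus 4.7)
The plan is to exploit the fact that $A$ is an annulus, which sharply restricts the isotopy classes of essential arcs that can appear as components of $\gamma_i \cap A$. I would begin by enumerating these classes: up to isotopy in $A$, there are at most six essential arc types, namely four seams and two waves, and crucially two of the seam classes are represented in $Y$ by the $\partial X$-arcs $\alpha, \beta$ on the rectangular boundary of $A$. This enumeration is exactly what is illustrated in Figure \ref{annrect}.

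Next, applying Lemma \ref{diam} to the pair of disjoint arcs $\alpha, \beta \subset Y$ yields $d_Y(\pi_Y(\alpha), \pi_Y(\beta)) \leq 2$; this is the worst case, achieved because $\alpha$ and $\beta$ are nonisotopic seams with endpoints on the same two components of $\partial Y$. I would then check, one remaining arc class at a time, that any $\delta \subset \gamma_i \cap A$ which is not isotopic in $Y$ to $\alpha$ or $\beta$ satisfies $d_Y(\pi_Y(\delta), \pi_Y(\alpha)) \leq 1$. By inspection of the four leftover classes, such a $\delta$ is either a wave, or a seam whose endpoints do not lie on the same pair of components of $\partial Y$ as those of $\alpha$; in either case Lemma \ref{diam} rules out distance $2$.

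With this in hand, the triangle inequality through $\pi_Y(\alpha)$ handles every pair $(\delta_1, \delta_2)$ in which neither arc is isotopic to $\alpha$ or $\beta$, yielding the bound $1 + 1 = 2$. The remaining pairings, where one or both of $\delta_1, \delta_2$ is isotopic to $\alpha$ or $\beta$, reduce either to the direct bound $d_Y(\pi_Y(\alpha), \pi_Y(\beta)) \leq 2$ from Lemma \ref{diam} or to the distance-$1$ bound just established.

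The main obstacle is the casework in Step 1 and the verifications in Step 2: one must correctly identify the six isotopy classes of arcs inside the rectangular annulus $A$ and confirm the distance-$1$ bound for each of the four classes distinct from $\alpha, \beta$. Because the combinatorics of arcs in an annulus with four marked arcs on one boundary is very restricted, this case analysis is tractable and amounts to reading off Figure \ref{annrect} and invoking Lemma \ref{diam}.
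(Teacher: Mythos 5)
Your proposal is correct and follows essentially the same route as the paper's proof: enumerate the six isotopy classes of arcs of $\gamma_i \cap A$, use Lemma \ref{diam} to bound $d_Y(\pi_Y(\A),\pi_Y(\n))$ and to get the distance-one bound to $\pi_Y(\A)$ for the remaining classes, then conclude by the triangle inequality through $\pi_Y(\A)$. The extra justifications you supply (why the leftover classes are waves or seams on a different pair of boundary components, and the degenerate pairings where some $\delta_i$ is isotopic to $\A$ or $\n$) are accurate and only make explicit what the paper leaves implicit.
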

\begin{lemma}\label{seam2}
Let $X \subset \Sigma$ be an essential subsurface such that $\nu$ and $\nu'$ are pants decompositions of $X$ related by an elementary move.  Suppose $Y$ is a 4-punctured sphere such that $X \cap Y$ contains a rectangular pair of pants $P$, where $p_1$ and $p_2$ denote the curve boundary components of $P$.  If $\delta$ and $\delta'$ are arc components of $\nu \cap P$ and $\nu' \cap P$ (respectively) that avoid both $p_1$ and $p_2$, then $d_Y(\pi_Y(\delta),\pi_Y(\delta')) \leq 2$.
\begin{proof}
Let $w = \pi_Y(\delta)$ and $w' = \pi_Y(\delta')$, and note that $\pd X \cap Y$ contains a single class of arcs in $Y$, a seam $[\A]$.  If both $\delta$ and $\delta'$ are waves or are isotopic to $\A$, then by Lemma \ref{diam},
\[ d_Y(w,w') \leq d_Y(w,\pi_Y(\A)) + d_Y(\pi_Y(\A),w') \leq 1+1=2.\]
Thus, suppose without loss of generality that $\delta'$ is a seam distinct from $\A$ and consider the possibilities for $\delta$, observing that $\A \cap \delta = \emp$.  If $\delta \cap \delta' = \emp$, the statement holds by Lemma \ref{diam}.  Otherwise, $\delta \cap \delta'$ can be at most two points with signed intersection at most $\pm 1$.  We leave it to the reader to verify that, up to a homeomorphism of $P$, there is precisely one such seam $\delta_1$ and two such waves $\delta_2$ and $\delta_3$ which are candidates for $\delta$.  See Figure \ref{cand}. \\

\begin{figure}[h!]
  \centering
    \includegraphics[width=.32\textwidth]{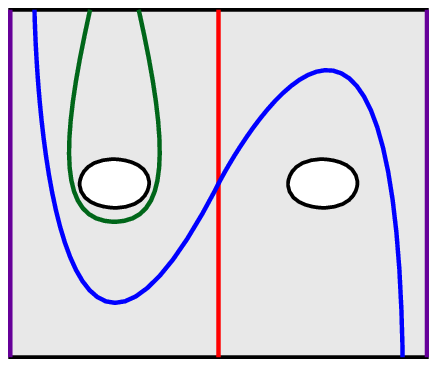}
    \includegraphics[width=.32\textwidth]{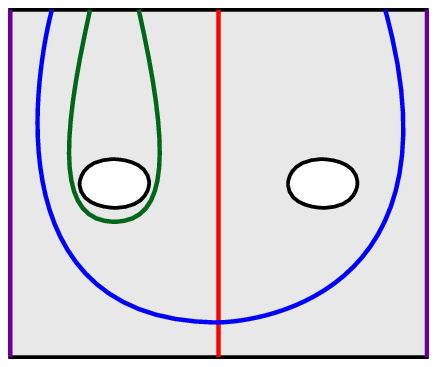}
    \includegraphics[width=.32\textwidth]{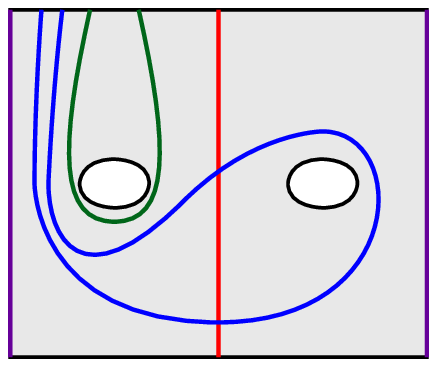}
    \caption{Three possibilities for a seam (left) or wave (middle and right) $\delta$.  Note that in each case there is a wave $\eps$ disjoint from $\delta$ and $\delta'$.}
    \label{cand}
\end{figure}

However, we note that for each $\delta_i$ we may find a wave $\eps$ disjoint from $\delta_i \cup \delta'$, implying by Lemma \ref{diam} that $d_Y(w,w') \leq 2$. This completes the proof.
\end{proof}
\end{lemma}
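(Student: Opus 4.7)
The plan is to analyze the geometry of the rectangular pair of pants $P$ sitting inside the four-punctured sphere $Y$, using the fact (established in the proof of Lemma \ref{cont2}) that $\partial X \cap Y$ consists of a single isotopy class of seams; let $\alpha$ denote a representative. Since $\delta$ and $\delta'$ lie in the interior of $X$ while $\alpha \subset \partial X$, both $\delta$ and $\delta'$ are disjoint from $\alpha$ in $Y$. The argument then divides into cases based on whether $\delta,\delta'$ are waves or seams and how they sit relative to $\alpha$.

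In the easy case where each of $\delta$ and $\delta'$ is either a wave or isotopic in $Y$ to $\alpha$, Lemma \ref{diam} gives $d_Y(\pi_Y(\delta),\pi_Y(\alpha)) \leq 1$ and $d_Y(\pi_Y(\alpha),\pi_Y(\delta')) \leq 1$: the only way to achieve distance $2$ is through two nonisotopic seams sharing both boundary components, which is excluded either because one of the arcs is a wave disjoint from $\alpha$, or because its projection coincides with $\pi_Y(\alpha)$. The triangle inequality then yields the desired bound.

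In the remaining case I may assume, without loss of generality, that $\delta'$ is a seam not isotopic to $\alpha$. If $\delta \cap \delta' = \emptyset$, Lemma \ref{diam} applies directly. Otherwise I use the elementary-move hypothesis: since curves shared by $\nu$ and $\nu'$ contribute identical arcs to $P$, the arcs $\delta$ and $\delta'$ must lie respectively in the unique curves $\gamma \in \nu \setminus \nu'$ and $\gamma' \in \nu' \setminus \nu$; and $i(\gamma,\gamma') \leq 2$, so $|\delta \cap \delta'| \leq 2$ with signed intersection at most $\pm 1$. I would then enumerate, up to a homeomorphism of $P$ fixing $\delta'$, the arcs $\delta \subset P$ that avoid $p_1$ and $p_2$, are disjoint from $\alpha$, and meet $\delta'$ in at most two points; I expect precisely one seam and two waves. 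For each candidate I would produce a wave $\epsilon \subset P$ disjoint from $\delta \cup \delta'$ and apply Lemma \ref{diam} twice to get $d_Y(\pi_Y(\delta),\pi_Y(\delta')) \leq d_Y(\pi_Y(\delta),\pi_Y(\epsilon)) + d_Y(\pi_Y(\epsilon),\pi_Y(\delta')) \leq 2$.

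The main obstacle is this last subcase: producing the complete finite list of candidate arcs $\delta$ and exhibiting the auxiliary wave $\epsilon$ for each. This is a concrete topological check inside the pair of pants $P$, constrained by disjointness from $\alpha$, $p_1$, $p_2$, and by the $|\delta \cap \delta'| \leq 2$ bound; it is the combinatorial heart of the lemma and is most transparently handled by drawing the three possibilities in $P$.
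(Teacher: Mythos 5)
Your proposal is correct and follows essentially the same route as the paper's proof: reduce via the seam class $[\A]$ of $\pd X \cap Y$ and Lemma \ref{diam} to the case of a seam $\delta'$ meeting $\delta$, bound $|\delta \cap \delta'|$ by the elementary move, enumerate the one seam and two wave candidates up to homeomorphism of $P$, and finish with an auxiliary wave $\eps$ disjoint from $\delta \cup \delta'$. The finite enumeration you defer to a picture is exactly what the paper also leaves to the reader (Figure \ref{cand}), so nothing essential is missing.
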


\section{Proof of the main theorem}
In order to prove the main theorem, we first turn to the proof of Theorem \ref{maintech}.  We will not require the full generality of Theorem \ref{maintech}; however, it may be of independent interest.  It provides a Lipschitz property for the projection of a path in $\Pa(\Sigma)$ to $\Pa(Y)$, where $Y$ is the complementary subsurface of an $(n \X 1)$-multicurve.  The interested reader may compare Theorem \ref{maintech} to Theorem 2 of \cite{APS1} or Theorem 2 of \cite{APS2}.

\begin{theorem}\label{maintech}
Suppose that $Q$ is an $(n \X 1)$-multicurve in $\Sigma$ such that $Y = \Sigma_Q$, and let $\nu_0,\dots,\nu_p$ denote a path in $\Pa(\Sigma)$ such that $p \geq \cci(Y)$.  After a possible commutation of edges, there exists $q$ such that $0 < q \leq p$ and
\[ d_Y(\nu_0,\nu_q) \leq q.\]
\end{theorem}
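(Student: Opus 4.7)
My plan is to control the growth of $d_Y(\nu_0,\nu_q)$ along the path by combining commutation of edges with the cornered-subsurface machinery of Section \ref{subs}. The central observation is that a single elementary move, supported in a complexity-one surface $S$, can jump the projection $\pi_Y$ by at most $2$ in any one component $Y_j$ (Lemma \ref{diam}), and such jumps of size $2$ can only occur when cornered components of $S\cap Y_j$ contribute positive $\cci_S$-mass (Lemmas \ref{cont1} and \ref{cont2}). Thus, up to commutation, the total growth of $d_Y$ along a prefix of length $q$ should be bounded in terms of $\cci_{X_q}(Y)$, which Lemma \ref{corner} in turn caps at $\cci(Y) \leq p$.

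My first step is to try to commute to the front a move whose support $S_i$ either avoids $Y$ entirely or is contained in a single $Y_j$. In the first case $\pi_Y(\nu_1) = \pi_Y(\nu_0)$, and in the second the move changes exactly one coordinate of the projection, and by exactly one; in both cases $d_Y(\nu_0,\nu_1) \leq 1$ and $q=1$ suffices. So assume no such commutation is possible, meaning every $S_i$ essentially crosses $\pd Y$ and is not contained in any single $Y_j$. Writing $X_q$ for the support of the prefix $\nu_0,\dots,\nu_q$, I would then study the telescoping increments $d_Y(\nu_0,\nu_q) - d_Y(\nu_0,\nu_{q-1})$ and match them against the growth $\cci_{X_q}(Y) - \cci_{X_{q-1}}(Y)$. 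Using Lemmas \ref{seam1} and \ref{seam2} to bound the $d_{Y_j}$-jump across a rectangular annulus or rectangular pair of pants by $2$, and Lemma \ref{corner} to give $\cci_{X_p}(Y) \leq \cci(X_p) \leq \cci(Y) \leq p$, I expect an inequality of the shape $d_Y(\nu_0,\nu_p) \leq \cci(Y) + (\text{number of moves with small cornered contribution}) \leq p$. A pigeonhole on $q \in \{1,\dots,p\}$ (or by extracting the first $q$ where the running bound becomes sharp) then extracts the desired prefix.

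The main obstacle, as I see it, is the careful bookkeeping required to simultaneously commute edges and track cornered contributions: commutations change the intermediate pants decompositions $\nu_i$, and Lemma \ref{diam}'s uniform diameter bound of $2$ on each $\pi_{Y_j}(\nu_i)$ will be essential to pass back and forth between pointwise projection estimates and the max-min quantity $d_Y$. A secondary subtlety arises when a single $S_q$ interacts with several $Y_j$ simultaneously (for example when $S_q$ is a $\Sigma_{0,4}$ straddling multiple components of $Y$); I would handle this with a case analysis on $S_q \cap Y_j$ guided by Lemma \ref{intersections}, which restricts how cornered components meet the boundary curves of a pair of pants, ensuring that distinct elementary moves contribute to distinct cornered pieces of $X \cap Y$. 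Once this accounting is secured, the rigidity forced by $\cci_{X_p}(Y) = \cci(X_p) = \cci(Y) = p$ in the extremal case should deliver the required $q$ directly.
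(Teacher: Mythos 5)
There is a genuine gap here, and it sits exactly where the paper has to work hardest. Your accounting scheme rests on the chain $\cci_{X_p}(Y)\leq \cci(X_p)\leq \cci(Y)\leq p$, but the middle inequality is false: the support $X_p$ of the path is not controlled by $Y$ at all and can be an arbitrarily large subsurface (indeed all of $\Sigma$), so $\cci(X_p)$ can far exceed $\cci(Y)$. What Lemmas \ref{cont1}, \ref{cont2} and \ref{corner} actually give, applied to a connected prefix support $X_j$ with $\partial X_j\subset \nu_0\cap\nu_j$, is the chain $d_Y(\nu_0,\nu_j)\leq \cci(X_j)\leq \xi(X_j)+1\leq j+1$ — note these are prefix-wise estimates using $\partial X_j$, not per-move increments to be telescoped — and this bound is off by one from what the theorem requires. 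Your proposal contains no mechanism for removing that extra $1$, and that removal is the entire content of the paper's proof: either no component $Y_i$ is ever contained in the support, in which case every relevant $Y_i$ meets $\partial X_p\subset\nu_0\cap\nu_p$ and Lemma \ref{diam} gives $d_Y(\nu_0,\nu_p)\leq\cci(Y)\leq p$ (this is the only place the hypothesis $p\geq\cci(Y)$ is used, not in the way you invoke it); or else one passes to the first index $m+1$ at which some $Y_1$ is engulfed, reduces to the extremal situation where all inequalities above are equalities (forcing $X_j$ planar, $\xi(X_j)=j$, and each $Y_i$ contributing integrally), shows the engulfing curve is nonseparating so $\cci(X_{m+1})=\cci(X_m)$, and then proves by a long case analysis (Cases \ref{c1}–\ref{c5}, using Lemmas \ref{intersections}, \ref{seam1}, \ref{seam2}, the rectangular pair of pants $R$, the seams $\alpha,\beta$, and the indices $r_1,r_2,l$) that the newly engulfed component satisfies $d_{Y_1}(\nu_0,\nu_{m+1})\leq 2$, so that $d_Y(\nu_0,\nu_{m+1})\leq\cci(X_{m+1})\leq m+1$ and $q=m+1$ works. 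None of this extremal analysis appears in your sketch, and it cannot be replaced by bookkeeping of cornered contributions move by move.

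Two smaller points. First, your opening reduction — commuting to the front a move whose support avoids $Y$ or lies in a single $Y_j$ — is not generally available: commutation requires adjacent moves with disjoint supports, and in any case such a reduction does not recover the off-by-one slack. Second, the claim that a single elementary move changes $\pi_{Y_j}$ by at most $2$ is not a consequence of Lemma \ref{diam} (which concerns disjoint arcs, while $\gamma_i$ and $\gamma_i'$ intersect); even granting such a per-move bound, summing it yields only $d_Y(\nu_0,\nu_p)\leq 2p$, which is far weaker than the statement to be proved.
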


The proof of Theorem \ref{maintech} is divided into Lemmas \ref{support-no-contain} and \ref{support-contain}.  For both lemmas, we suppose $Y = \cup_{i=1}^n Y_i$, where $\xi(Y_i)=1$.  In addition, let $S_j$ denote the support of the $j$th elementary move and $X_j$ denote the support of the path $\nu_0,\dots,\nu_j$.  Note that $X_0 = \emp \subset X_1 \subset \dots \subset X_p \subset \Sigma$.

\begin{lemma}\label{support-no-contain}
If $X_p$ does not contain a component $Y_i$ of $Y$, then
\[ d_Y(\nu_0,\nu_p) \leq p.\]
\begin{proof}
If $X_p \cap Y_i = \emp$, then $\nu_0 \cap Y_i = \nu_p \cap Y_i$ and $d_{Y_i}(\nu_0,\nu_p) = 0$.  Otherwise $X_p \cap Y_i \neq \emp$ and $Y_i$ contains an arc or curve $\A \subset \pd X_p \subset \nu_0\cap \nu_p$.  Observing that $d_{Y_i}(\nu_0,\nu_p) \leq d_{Y_i}(\nu_0,\pi_{Y_i}(\A))$, we may invoke Lemma \ref{diam}, which implies that $d_{Y_i}(\nu_0,\nu_p) \leq 1$ if $Y_i = \Sigma_{1,1}$ and $d_{Y_i}(\nu_0,\nu_p) \leq 2$ if $Y_i = \Sigma_{0,4}$.  Thus,
\begin{eqnarray*}
d_Y(\nu_0,\nu_p) &=& \sum_{Y_i = \Sigma_{1,1}} d_{Y_i}(\nu_0,\nu_p) + \sum_{Y_i = \Sigma_{0,4}} d_{Y_i}(\nu_0,\nu_p) \\
&\leq&  \sum_{Y_i = \Sigma_{1,1}} 1\,\,+ \sum_{Y_i = \Sigma_{0,4}} 2 \, = \, \cci(Y) \leq p,
\end{eqnarray*}
as desired.
\end{proof}
\end{lemma}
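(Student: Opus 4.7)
The plan is to bound $d_Y(\nu_0,\nu_p)$ one complexity-one component at a time. Since $Y = \bigsqcup_{i=1}^n Y_i$ is a disjoint union, $\Pa(Y)$ factors as the direct product $\prod_i \Pa(Y_i)$, and both the $\max$ and $\min$ in the definition of $d_Y$ can be computed coordinatewise; hence $d_Y(\nu_0,\nu_p) = \sum_{i=1}^n d_{Y_i}(\nu_0,\nu_p)$. Because $\sum_i \cci(Y_i) = \cci(Y) \leq p$, it is enough to show $d_{Y_i}(\nu_0,\nu_p) \leq \cci(Y_i)$ for every $i$.

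For each $Y_i$ I split into cases based on how $X_p$ meets $Y_i$. If $X_p \cap Y_i = \emp$, then no elementary move in the path $\nu_0,\dots,\nu_p$ is supported in $Y_i$, so $\nu_0 \cap Y_i = \nu_p \cap Y_i$ and therefore $d_{Y_i}(\nu_0,\nu_p) = 0$. Otherwise $X_p$ meets but, by hypothesis, does not contain $Y_i$, so the topological boundary of $X_p \cap Y_i$ inside $Y_i$ is nonempty. Using the earlier structural remark that $\pd X_p$ is isotopic to curves in $\pd \Sigma \cup (\nu_0 \cap \nu_p)$ and that $\pd Y_i \not\subset \pd \Sigma$, I extract an essential arc (or a curve interior to $Y_i$) $\A$ which is a subarc of some curve $c \in \nu_0 \cap \nu_p$. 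Consequently $\pi_{Y_i}(\A)$ is a vertex of $\Pa(Y_i)$ lying simultaneously in $\pi_{Y_i}(\nu_0)$ and $\pi_{Y_i}(\nu_p)$, so for any $w_0 \in \pi_{Y_i}(\nu_0)$,
\[ \min_{w_p \in \pi_{Y_i}(\nu_p)} d_{Y_i}(w_0,w_p) \;\leq\; d_{Y_i}\bigl(w_0,\pi_{Y_i}(\A)\bigr) \;\leq\; \cci(Y_i), \]
where the last inequality follows from Lemma \ref{diam} (together with its preceding remark for $Y_i \cong \Sigma_{1,1}$), which bounds the diameter of $\pi_{Y_i}(\nu_0)$ in the Farey graph $\Pa(Y_i)$ by $1$ when $Y_i \cong \Sigma_{1,1}$ and by $2$ when $Y_i \cong \Sigma_{0,4}$. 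Maximizing over $w_0$ and summing over $i$ gives the lemma.

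The main point requiring any care is extracting the common projection in the second case; this boils down to observing that if $X_p$ properly overlaps $Y_i$ then $\pd X_p$ must cross into the interior of $Y_i$, and under the minimal-intersection convention for $\pd X_p$ with $\pd Y_i$ any such crossing piece is essential in $Y_i$ and is carried by a curve of $\nu_0 \cap \nu_p$. Everything else is a packaging of the diameter bound from Lemma \ref{diam} against the product structure of $\Pa(Y)$.
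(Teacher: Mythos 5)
Your proposal is correct and follows essentially the same route as the paper: bound $d_Y$ componentwise, note that if $X_p$ meets but does not contain $Y_i$ then $\pd X_p \subset \nu_0 \cap \nu_p$ supplies an arc or curve whose projection lies in both $\pi_{Y_i}(\nu_0)$ and $\pi_{Y_i}(\nu_p)$, and then apply the diameter bound of Lemma \ref{diam} (and the $\Sigma_{1,1}$ remark) before summing to get $\cci(Y) \leq p$. The extra detail you supply on the coordinatewise splitting of $d_Y$ and on extracting the common projection is exactly what the paper leaves implicit.
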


\begin{lemma}\label{support-contain}
If there exists $p' \le p$ such that $X_{p'}$ contains a component $Y_i$ of $Y$, then, after a possible commutation of edges, there exists $q$ such that $0< q \le p'$ and
$$d_Y(\nu_0,\nu_q) \le q. $$

\begin{proof}
Choose an index $m < p'$, such that $X_m$ does not contain a component $Y_i$ of $Y$ but $X_{m+1}$ contains some $Y_i$.  By commuting edges, we may further assume that $X_{m+1}$ is connected.  Note that if $m=0$, then $X_{m+1} = Y_i$ for some $i$ and letting $q = 1$ completes the proof of the theorem.  Thus, we may suppose that $m \geq 1$. \\

Let $\gamma_j$ and $\gamma_j'$ denote the unique curves in $\nu_{j-1} \setminus \nu_j$ and $\nu_j \setminus \nu_{j-1}$, respectively, so that the $j$th elementary move replaces $\gamma_j \in \nu_{j-1}$ with $\gamma_j' \in \nu_j$.  For any $j$, $1 \leq j \leq m$, suppose for the moment that $X = X_j$ is connected, noting that $\xi(X) = |\nu_0 \setminus \nu_j| \leq j$.  In addition, if $g$ is the genus of $X$ and $b = |\pd X|$, then $\xi(X) = 3g + b - 3$, whereas $\cci(X) = 2g + b - 2$.  Thus, $\cci(X) \leq \xi(X) + 1$, with equality if and only if $g = 0$. \\

By assumption, no $Y_i$ is contained in $X$, so for each $Y_i$ that meets $X$, we have $d_{Y_i}(\nu_0,\nu_j) \leq 2$ as in the proof of Lemma \ref{support-no-contain}.  It follows that we may partition $\{Y_i\}$ into
\begin{eqnarray*}
T_0 &=& \{Y_i: d_{Y_i}(\nu_0,\nu_j) = 0\}, \\
T_1 &=& \{Y_i: d_{Y_i}(\nu_0,\nu_j) = 1\}, \\
T_2 &=& \{Y_i: d_{Y_i}(\nu_0,\nu_j) = 2\},
\end{eqnarray*}
and thus $d_Y(\nu_0,\nu_j) = |T_1| + 2|T_2|$.  Let $c_i$ denote any component of $\nu_0$ satisfying $d_{Y_i}(c,\nu_j) = d_{Y_i}(\nu_0,\nu_j)$.  If $Y_i \in T_1,T_2$, then $c_i \subset \text{int}(X)$.  Further, $\partial X \subset \nu_j$ and thus $d_{Y_i}(c_i,\nu_j) \le d_{Y_i}(c_i, \partial X)$.  It follows from Lemmas \ref{cont1} and \ref{cont2} that $\cci_X(Y_i) \geq 1$ if $Y_i \in T_1$ and $\cci_X(Y_i) \geq 2$ if $Y_i \in T_2$.  Therefore $|T_1| + 2|T_2| \leq  \sum \cci_{X}(Y_i)   \le \cci(X)$, and stringing inequalities together yields
\begin{equation}\label{ineq}
 d_Y(\nu_0,\nu_j)= |T_1| + 2|T_2|  \leq  \cci(X) \leq \xi(X) + 1 \leq j+1.
\end{equation}

Of course, it may be the case that for some $j'$ with $1 \leq j' \leq m$, the subsurface $X_{j'}$ of $\Sigma$ is not connected.  However, for each connected component $X$ of $X_{j'}$, we have that $X$ is the union of supports of elementary moves, so we may perform a commutation of edges so that $X = X_j$ for some $j \leq j'$. (Here, $X_j$ is the union of the first $j$ elementary moves occurring in our new path of pants decompositions.)  It follows that for any connected component $X (= X_j)$ of $X_{j'}$, if any of the inequalities in $(1)$ is not sharp, the theorem is proved with $q = j$.  Thus, we may suppose for the remainder of the proof that for such $X$,
{\be
\item $|T_1| + 2|T_2| = \cci(X)$, so that $\cci_{X}(Y_i) = k$ for all $Y_i \in T_k$ ($k=0,1,2$),
\item $\cci(X) = \xi(X) + 1$, so that $X$ is planar, and
\item $\xi(X) = j$, so that $\gamma_k' \neq \gamma_l$ for any $k,l \leq j$.
\ee}

We break the remainder of the proof into a number of possibly overlapping cases, which in total will exhaust all possibilities, proving the statement in question.

{\begin{claim}\label{c1} 
There exists $j \leq m$ such that $X_j \cap \pd Y$ contains a wave.
\begin{proof}
Let $X = X_j$ and suppose that for some $i$, there exists $\gamma \subset \pd Y_i$ such that $\gamma \cap X$ contains a wave $\delta$.  By the above arguments, we may assume $X$ is connected after commuting edges in $\Pa(\Sigma)$.  Since $X$ is planar, $\delta$ separates $X$ into two subsurfaces $X'$ and $X''$ such that $\cci_X(X') \equiv \cci_X(X'') \equiv \frac{1}{2} (\text{mod } 1)$.  However, 
for every $i$, we have by our above assumptions on $X$ that each $Y_i$ contributes a component of integral Euler characteristic to $X$.  It follows that either $\cci_X(X' \setminus \bigcup \{Y_i\})$ or $\cci_X(X'' \setminus \bigcup \{Y_i\})$ is positive; hence $|T_1| + 2|T_2| < \cci(X)$ and the theorem is proved.
 \end{proof}
 \end{claim}}

If a cornered component $D$ of $X_j \cap Y_i$ has a bigon boundary component, then $X_j \cap \pd Y_i$ contains a wave and the theorem holds by Case \ref{c1}.  Thus, we may assume from this point forward that no $D$ has a bigon boundary component.  For any $j$, with $1 \leq j \leq m$, suppose that $X_j \cap Y_i$ contains a pair of pants component $P$ such that $\pd P \subset \pd X_j \cup \pd Y_i$.  We call such $P$ a \emph{full pair of pants}.

{\begin{claim}\label{c2}
There is a $j$ with $1 \leq j \leq m$ such that $X_j \cap Y$ contains a full pair of pants.
\begin{proof}
Suppose $X_j$ is connected and $X_j \cap Y_i$ contains a full pair of pants $P$.  Then a component $\gamma$ of $\pd X_j$ is essential in $Y_i$, implying that $\pi_{Y_i}(\nu_0) = \pi_{Y_i}(\nu_j) = \{\gamma\}$.  Hence $d_{Y_i}(\nu_0,\nu_j) = 0$ but $\cci_{X_j}(Y_i) \geq 1$, and again $|T_1| + 2|T_2| < \cci(X_j)$.
\end{proof}
\end{claim}}

{\begin{claim}\label{c3}
The curve $\gamma_{m+1} = \nu_m \setminus \nu_{m+1}$ is separating in $X_{m+1}$.
\begin{proof}
Suppose without loss of generality that $Y_1 \subset X_{m+1}$.  If $X_m \cap \pd Y_1 = \emp$, then $X_m \cap Y_1 = \emp$, as the only nontrivial subsurfaces of $Y_1$ have at least one boundary component isotopic into $\pd Y_1$.  Thus, $X_{m+1}$ is the disjoint uniont of $X_m$ and $S_{m+1}$ and $Y_1 \subset S_{m+1}$.  In this case we may commute edges in $\Pa(\Sigma)$ so that $Y_1 \subset S_1$ and $m=0$, which is addressed above.  Thus, suppose $X_m \cap \pd Y_1 \neq \emp$.  If $\pd Y_1 \subset X_m$, then either $Y_1$ is contained in a component of $X_m$ (which we have assumed does not occur) or $X_m \cap Y_1$ contains a full pair of pants $P$ (which refers us to Case \ref{c2}).  Hence, assume that $\pd X_m \cap \pd Y_1 \neq \emp$.  Since $\pd Y_1 \subset X_{m+1}$, we have that if $\gamma_{m+1}$ separates $X_{m+1}$, then there is a component $X'$ of $X_m$ such that $\gamma_{m+1}$ is isotopic in $\Sigma$ to a single component $\gamma$ of $\pd X'$, and $\pd X' \cap \pd Y_1 \subset \gamma \cap \pd Y_1$, implying that $X' \cap \pd Y_1$ contains a wave as in Case \ref{c1}.  
\end{proof}
\end{claim}}

By ruling out Case \ref{c3}, we may assume from this point forward that $\gamma_{m+1}$ is nonseparating in $X_{m+1}$, and, as a consequence, $X_m$ is connected and $\gamma_{m+1}$ is isotopic in $\Sigma$ to two distinct components of $\pd X_m$.  In addition, $\cci(X_m) = \cci(X_{m+1})$, $\gamma_k' \neq \gamma_l$ for any $k,l \leq m$, and by (\ref{ineq}) above, we have $d_Y(\nu_0,\nu_m) \leq m+1$. Recall our assumption that $Y_1$ is contained in $X_{m+1}$. \\

{\begin{claim}\label{c4}
$Y_1 \subset X_{m+1}$ is a punctured torus.
\begin{proof}
Since $\cci_{X_m}(Y_1) = \cci_{X_{m+1}}(Y_1) = 1$, our previous assumptions imply $d_{Y_1}(\nu_0,\nu_m) = 1$ and there is a component $R$ of $X_m \cap Y_1$ such that $\cci_{X_m}(R) = 1$.  If $R$ is a full pair of pants, we refer to Case \ref{c2}.  Since $|\pd Y_1|=1$, $R$ is not a rectangular annulus; hence suppose that $R$ is an octagon.  If $X_m \cap \pd Y_1$ contains a wave, refer to Case \ref{c1}.  Otherwise $R \cap \pd Y_1$ consists of four seams in $X_m$ connecting the two components $\gamma'$ and $\gamma''$ of $\pd X_m$ that are isotopic to $\gamma_{m+1}$.  Any two of these seams separate the planar surface $X_m$; hence four such seams cannot cobound an octagon $R$.
\end{proof}
\end{claim}}

{\begin{claim}\label{c5}
$Y_1 \subset X_{m+1}$ is a 4-punctured sphere.
\begin{proof}
The bulk of the proof of the theorem will be devoted to this case, which can easily be seen to exhaust all possibilities.  Since $\cci_{X_m}(Y_1) = \cci_{X_{m+1}}(Y_1) = 2$, our previous assumptions imply $d_{Y_1}(\nu_0,\nu_m) = 2$ and there is a component $R$ of $X_m \cap Y_1$ such that $\cci_{X_m}(R) = 2$.  Lemma \ref{cont2} asserts that $R$ is a rectangular pair of pants such that $\pd X_m \cap Y_1$ contains a single class of arc, a seam $[\A]$.  Let $q_1$ and $q_2$ denote the curve components of $\pd R$ contained entirely in $X_m$, noting that $q_1$ and $q_2$ are isotopic to curves in $\pd Y_1$. \\ 

By arguments in the proof of Lemma \ref{support-no-contain}, we have $d_{Y_i}(\nu_0,\nu_{m+1}) \leq \cci_{X_{m+1}}(Y_i)$ if $Y_i$ is not contained in $X_{m+1}$.  If for every $i$, $d_{Y_i}(\nu_0,\nu_{m+1}) \leq \cci_{X_{m+1}}(Y_i)$ then as in (\ref{ineq}) above,
\begin{equation}\label{m-plus-one}
d_Y(\nu_0,\nu_{m+1}) \leq \cci(X_{m+1}) = \cci(X_m) \leq \xi(X_m) + 1 \leq m+1,
\end{equation}
completing the proof.  Otherwise, we may suppose without loss of generality that for, say, $Y_1$ we have $d_{Y_1}(\nu_0,\nu_{m+1}) > \cci_{X_{m+1}}(Y_1)$.  It follows that $Y_1$ must be contained in $X_{m+1}$, and we choose $Y_1$ so that $d_{Y_1}(\nu_0,\nu_{m+1})$ is maximal over $\{Y_i\}$.  Note that $\gamma_{m+1} \in \nu_m$ intersects $R$ in two arc boundary components isotopic to $\A$ in $Y_1$. 
By assumption, every curve of $\nu_m \cap \text{int}(R)$ is one of the curves $\gamma'_k$, where $k \leq m$, and as $\gamma'_k \neq \gamma_{m+1}$, we have $\gamma'_k \in \nu_{m+1}$.  If some $\gamma_k'$ meets $R$ in an arc or curve $\delta$, then $\delta$ is also a component of $\nu_{m+1} \cap Y_1$.  We will show that either one of the previous cases holds, or there is such a $\gamma_k'$ and $\delta \subset \gamma_k' \cap R$ satisfying $d_{Y_1}(\nu_0,\delta) \leq 2$.  In this case, $d_{Y_1}(\nu_0,\nu_{m+1}) \leq 2$ and (\ref{m-plus-one}) holds, completing the proof. \\



Suppose that for some $j < m$, $q_1 \cap X_j$ contains an arc.  Since $q_1 \subset X_m$, we may choose $j$ such that $q_1 \cap X_j$ contains an arc but $q_1$ is contained entirely within $X_{j+1}$.  As $X_{j+1}$ is planar, $\gamma_j$ is separating in $X_{j+1}$, and there is a component $X'$ of $X_j$ such that $\gamma_j$ is isotopic to a component of $\pd X'$ containing $q_1 \cap \pd X'$.  Hence, $q_1 \cap X'$ contains a wave, and we refer to Case \ref{c1}.  A parallel argument shows the same to be true if $q_2 \cap X_j$ contains an arc. \\


We now undertake a careful analysis of the stages at which $d_{Y_1}(\nu_0,\nu_j)$ grows with $j$.  To this end, let $r_1$ be the smallest index for which $d_{Y_1}(\nu_0,\nu_{r_1}) > 0$.  This implies $\cci_{X_{r_1-1}}(Y_1) = 0$ and $\pi_{Y_1}(\nu_0) \subset \pi_{Y_1}(\nu_{r_1-1})$.  After commutations, we may assume that $X_{r_1-1}$ is connected.  Suppose first that $d_{Y_1}(\nu_0,\nu_{r_1}) = 2$.  Then $\cci_{X_{r_1}}(Y_1) = 2$; hence $\cci(X_{r_1} \setminus X_{r_1-1}) \geq 2$, which implies that $X_{r_1}$ is the disjoint union of $X_{r_1-1}$ and $S_{r_1}$.  Thus, $S_{r_1} \cap Y_1$ is a rectangular pair of pants, $q_1$ and $q_2$ are isotopic into $\pd S_{r_1}$, and $\gamma_{r_1}'$ meets $R$ in an arc or curve $\delta$.  Applying Lemma \ref{seam2} with $X = X_{r_1}$, for any component $\delta'$ of $\nu_{r_1-1} \cap Y_1$, we have $d_{Y_1}(\delta',\delta) \leq 2$.  Thus, $d_{Y_1}(\nu_0, \delta) \leq d_{Y_1}(\nu_{r_1-1},\delta) \leq 2$, completing the proof. \\

On the other hand, suppose that $d_{Y_1}(\nu_0,\nu_{r_1}) =1$.  As $X_{r_1-1}$ is connected, $\overline{X_{r_1} \setminus X_{r_1-1}}$ is either a pair of pants (if $X_{r_1-1}$ and $\text{int}(S_{r_1})$ overlap) or a 4-punctured sphere (if $X_{r_1-1}$ and $S_{r_1}$ are disjoint).  By Lemma \ref{cont1} and our previous assumptions, there is a component $Q_1$ of $X_{r_1} \cap Y_1$ such that $\cci_{X_{r_1}}(Q_1)=1$.  If $Q_1$ is a full pair of pants, we refer to Case \ref{c2}, so we may assume that $Q_1$ is either an octagon or a rectangular annulus.  In addition, $d_{Y_1}(\nu_{r_1 - 1},\nu_{r_1}) > 0$ implies that there is an arc $\mu \subset \gamma_{r_1} \cap Y_1$ such that $\pi_{Y_1}(\mu) \notin \pi_{Y_1}(\nu_{r_1})$.  In particular, this implies that $Q_1$ meets $\gamma_{r_1}$.  If $\overline{X_{r_1} \setminus X_{r_1-1}}$ is a pair of pants, then $Q_1 \cap X_{r_1-1}$ is a collection of rectangles joining \emph{distinct} boundary components of $X_{r_1-1}$, or else $X_{r-1} \cap \pd Y_1$ contains a wave and we refer to Case \ref{c1}.  In the former case, each arc of $\gamma_{r_1} \cap Y_1$ is parallel in $Y_1$ to an arc of $(\nu_0 \cap \nu_{r_1}) \cap Y_1$, and thus $\pi_{Y_1}(\nu_0) \subset \pi_{Y_1}(\nu_{r_1})$, a contradiction (see Figure \ref{distint}).  It follows that $\overline{X_{r_1} \setminus X_{r_1-1}}$ is the 4-punctured sphere $S_{r_1}$. \\

\begin{figure}[h!]
  \centering
    \includegraphics[width=.6\textwidth]{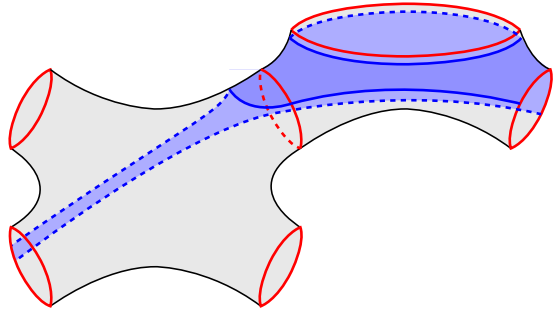}
    \caption{The case in which $\overline{X_{r_1} \setminus X_{r_1 - 1}}$ is a pair of pants.  Here $\gamma_{r_1} \cap Y_1$ is parallel to an arc of $\pd X_{r_1} \cap Y_1$.}
    \label{distint}
\end{figure}

If a boundary arc of $Q_1 \subset R$ meets $q_1$ or $q_2$, then $q_1 \cap X_{r_1}$ or $q_2 \cap X_{r_1}$ contains an arc, completing the proof of the theorem as described above.  Otherwise, boundary arcs of $Q_1$ avoid $q_1$ and $q_2$.  Suppose that $\gamma_{r_1}'$ meets $Q_1$ in an arc $\delta$ and let $\delta'$ be any arc of $\nu_{r_1-1} \cap Y_1$.  If $\delta' \cap \delta = \emp$, then $d_{Y_1}(\delta',\delta) \leq 2$ by Lemma \ref{diam}.  On the other hand, if $\delta' \subset Q_1$ and $Q_1$ is a rectangular annulus, Lemma \ref{seam1} asserts that $d_{Y_1}(\delta',\delta) \leq 2$.  If $\delta' \subset Q_1$ and $Q_1$ is an octagon, then we note that $\nu_{r_1-1} \cap X_m$ and $\nu_{r_1} \cap X_m$ are pants decompositions of $X_m$, $\delta' \subset \nu_{r_1-1} \cap R$, $\delta \subset \nu_{r_1} \cap R$, and both $\delta$ and $\delta'$ avoid $q_1$ and $q_2$.  In this case we apply Lemma \ref{seam2}, which asserts $d_{Y_1}(\delta',\delta) \leq 2$.  In any case, $d_{Y_1}(\nu_0,\delta) \leq d_{Y_1}(\nu_{r_1-1},\delta) \leq 2$, completing the proof. \\

Thus, we may assume that $\gamma_{r_1}'$ avoids $Q_1$, and since $Q_1 \subset S_{r_1}$ and $\gamma_{r_1}'$ cuts $S_{r_1}$ into two pairs of pants, $Q_1$ cannot be an octagon by Lemma \ref{intersections}.  For the remainder of the proof, we suppose that $Q_1$ is a rectangular annulus and let $q$ denote the boundary curve of $Q_1$.   As $\gamma_{r_1}'$ avoids $Q_1$, it must be isotopic to $q$ by Lemma \ref{intersections}.  In addition, $q \subset \text{int}(X_{r_1})$ implies $q\subset \pd Y_1$ and thus $q \subset \pd R$.  See Figure \ref{first}.  Suppose without loss of generality that $q = q_1$.  Since $Q_1$ is a rectangular annulus, there is an arc component, call it $\n$, of $\pd Q_1 \subset \nu_{r_1} \cap Y_1$ which is not isotopic to $\A$.  Note that $\beta$ is a seam in $Y_1$ that separates $q_1$ from $q_2$ in $R$.  See Figure \ref{RRR}. \\

\begin{figure}
  \centering
    \includegraphics[width=1 \textwidth]{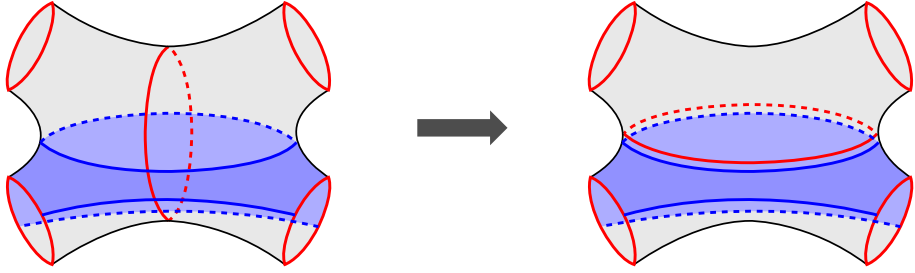}
    \caption{Pants move from $\nu_{r_1-1}$ to $\nu_{r_1}$ with support $S_{r_1}$.  The curve $\gamma_{r_1}'$ is isotopic to the curve component $q$ of $\pd Q_1$.}
    \label{first}
\end{figure}

Observe that $\n \subset (\pd X_{r_1} \cap Y_1) \subset (\nu_0 \cap \nu_{r_1}) \cap Y_1$, so by Lemma \ref{diam}, $d_{Y_1}(\nu_0,\beta) \leq 2$.  If $[\n] \in [\nu_m \cap Y_1]$, then there is an arc $\delta \subset \nu_m \cap R$ isotopic to $\beta$, and $d_{Y_1}(\nu_0,\delta) \leq 2$, completing the proof.  Therefore, we suppose that $[\beta] \notin [\nu_m \cap Y_1]$.  Let $l$ denote the smallest index such that $[\beta] \notin [\nu_l \cap Y_1]$, let $X_{l-1}'$ be the component of $X_{l-1}$ containing $Q_1$, and let $X_l'$ be the component of $X_l$ containing $X_{l-1}'$.  Since $\beta \subset \nu_{r_1} \cap Y_1$, we have $r_1 < l$, and by previous assumptions, $\cci_{X_l'}(Y_1) = 1$ or $2$ (since its value is an integer).  We have assumed $[\beta] \in [\nu_{l-1} \cap Y_1]$ but $[\beta] \notin [\nu_l \cap Y_1]$, which implies that $[\beta] \in [\gamma_l \cap Y_1]$.  By Lemma \ref{diam}, we have that for any arc $\delta'$ of $\nu_0 \cap Y_1$, either $d_{Y_1}(\delta',\A) \leq 1$ or $d_{Y_1}(\delta',\beta) \leq 1$.  It follows that $d_{Y_1}(\nu_0,\nu_{l-1}) = 1$.  If, in addition, $d_{Y_1}(\nu_0,\nu_l) = 1$, then $\cci_{X_l'}(Y_1) = 1$ and $X_l' \cap Y_1$ contains a rectangular annulus $D$ such that $Q_1 \subset D$ and thus $[\beta] \in [\pd X_l' \cap D] \subset [\nu_l \cap Y_1]$, a contradiction. \\

It follows that $d_{Y_1}(\nu_0,\nu_l) = 2$, so that $X_l' \cap Y_1$ contains a rectangular pair of pants, which is necessarily contained in $R$.  Also, $X_l' \setminus \eta(\gamma_l)$ has two components, $X_{l-1}'$ and another component we will call $S$, where $\cci_S(Y_1) = 1$ and $S \cap Y_1$ contains a rectangular annulus $Q'$.  Let $q'$ denote the boundary curve of $Q'$, noting that $q'$ is isotopic to $q_2$. \\

 If $S$ is a pair of pants, $q'$ is isotopic to a curve in $\pd X_l$.  As $Q' \subset S_l$ we have that $\gamma_l'$ meets $Q'$ by Lemma \ref{intersections}, and since $q_1$ is isotopic to $\gamma_r'$, both $\gamma_l$ and $\gamma_l'$ avoid $q_1$ and $q_2$.  Further, $\nu_{l-1} \cap X_m$ and $\nu_l \cap X_m$ are pants decompositions of $X_m$ which avoid $q_1$ and $q_2$, so we satisfy the hypotheses of Lemma \ref{seam2}. Following the proof of Lemma \ref{seam2}, we have that $\nu_{l-1} \cap R$ contains seams isotopic to $\A$ and $\beta$.  If $\delta$ is an arc of $\gamma_l' \cap Q_1 \subset \gamma_l' \cap R$, then $\delta$ is disjoint from $\A$ and either $\delta$ disjoint from $\beta$, or $\delta$ is one of $\delta_1$, $\delta_2$, or $\delta_3$, where $\delta_1$ is a seam meeting $\beta$ once, $\delta_2$ is a wave meeting $\beta$ once, and $\delta_3$ is a wave meeting $\beta$ twice, as pictured in Figure \ref{cand}.  Lemma \ref{seam2} implies that for such $\delta$, we have $d_{Y_1}(\A,\delta) \leq 1$ unless $\delta = \delta_1$, in which case $d_{Y_1}(\A,\delta_1) = 2$, and for all $\delta$, $d_{Y_1}(\beta,\delta) \leq 2$.  Now, let $\delta'$ be any arc of $\nu_0 \cap Y_1$ such that $[\delta'] \neq [\A],[\beta]$.  As $[\A],[\beta] \in [\nu_0 \cap Y_1]$, we may regard $\delta'$ as an arc in $R$ disjoint from $\A$ and $\beta$, so by Lemma \ref{diam}, $d_{Y_1}(\delta',\A) = 1$.  For $\delta \neq \delta_1$,
\begin{equation}\label{discuss1}
d_{Y_1}(\delta',\delta) \leq d_{Y_1}(\delta',\A) + d_{Y_1}(\A,\delta) \leq 2.
\end{equation}
One the other hand, there is a wave $\eps$ disjoint from both $\delta'$ and $\delta_1$, so
\begin{equation}\label{discuss2}
d_{Y_1}(\delta',\delta) \leq d_{Y_1}(\delta',\eps) + d_{Y_1}(\eps,\delta) \leq 2.
\end{equation}
Therefore, $d_{Y_1}(\nu_0,\delta) \leq 2$, completing the proof. \\

Suppose now that $S$ is not a pair of pants, so that $S$ is a component of $X_{l-1}$ that is not $X_{l-1}'$.  As such, we may (temporarily) commute edges in $\Pa(\Sigma)$ so that $S = X_j$ for some $j$.  Recall that $\cci_S(Y_1) = 1$, and since $S = X_j$ is connected, we have that $d_{Y_1}(\nu_0,\nu_j) = 1$ and there exists an index $s$ such that $d_{Y_1}(\nu_0,\nu_{s}) = 1$ but $d_{Y_1}(\nu_0,\nu_{s-1}) = 0$.  A parallel argument to the one above pertaining to $Q_1$ shows that there is a component $Q_2 \subset Q'$ of $X_s \cap Y_1$ contained in the 4-punctured sphere $S_{s}$ such that $Q_2$ is a rectangular annulus whose curve boundary $q_2$ is isotopic to the curve $\gamma_{s}'$, the unique curve in $\nu_s \setminus \nu_{s-1}$.  Now, note that reversing the commutation may result in a reindexing, but the sets of curves $\{\gamma_1',\dots,\gamma_m'\}$ and supports $\{S_1,\dots,S_m\}$ are not changed.  Thus, we conclude that $s$ corresponds to some index $r_2$ after reversing the commutation such that $Q_2 \subset X_{r_2} \cap Y_1$, $Q_2 \subset S_{r_2}$, and the boundary curve $q_2$ of $Q_2$ is isotopic to $\gamma_{r_2}'$. \\

To summarize, we have that $X_m \cap Y_1 = R$, a rectangular annulus, and $R$ is a union of $Q_1$, $Q_2$, and (possibly) some rectangles.  Further, for $i=1,2$ the arc components of $\text{Fr}_{Y_1} Q_i$ are isotopic to $\A$ and $\beta$, and $\gamma_{r_i}'$ is isotopic to $q_i$, the curve boundary component of $Q_i$, which coincides with a curve boundary component of $R$.  See Figure \ref{RRR} for a schematic of $R$.  Since $Q_i \subset S_{r_i}$, it follows that $\Sigma \setminus \eta(\nu_{r_i})$ has a pair of pants component $P_i$ containing $Q_i$.  Since $[\beta] \in [\nu_{r_1} \cap Y_1]$ but $[\beta] \notin [\nu_m \cap Y_1]$, there is (at least) one boundary component $\sigma_i$ of $P_i$ such that $\sigma_i \notin \nu_m$.  If two boundary components $\sigma_i$ and $\sigma_i'$ of $P_i$ are not in $\nu_m$, choose $\sigma_i$ so that there is an index $t_i$ such that $\sigma_i \notin \nu_{t_i}$ but $\sigma_i' \in \nu_{t_i}$. \\

\begin{figure}
  \centering
    \includegraphics[width=.5 \textwidth]{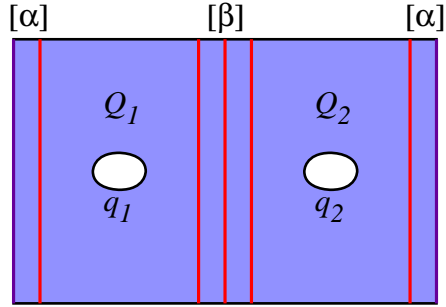}
    \caption{The rectangular annulus $R$.}
    \label{RRR}
\end{figure}

There are two final possibilities to consider which will complete the proof of the theorem.  First, suppose that $\sigma_1 = \sigma_2$ in $\Sigma$ (see Figure \ref{equals}).    In this case, $Q_1 \cup Q_2$ is contained in the support $S_t$ of an elementary move; as such the intersection of $S_t$ and $Y_1$ contains a rectangular pair of pants, which necessarily meets $\gamma_t'$ in an arc $\delta$.  We note that $\nu_{t-1} \cap X_m$ and $\nu_t \cap X_m$ are pants decompositions of $X_m$ meeting the hypotheses of Lemma \ref{seam2}.  Further, as discussed above, since $[\A],[\beta] \in \nu_0 \cap Y_1$, we have that for any arc $\delta'$ of $\nu_0 \cap Y_1$ such that $[\delta'] \neq [\A],[\beta]$, either (\ref{discuss1}) or (\ref{discuss2}) is satisfied.  It follows that $d_{Y_1}(\nu_0,\delta) \leq 2$, completing the proof. \\

\begin{figure}
  \centering
    \includegraphics[width=.65 \textwidth]{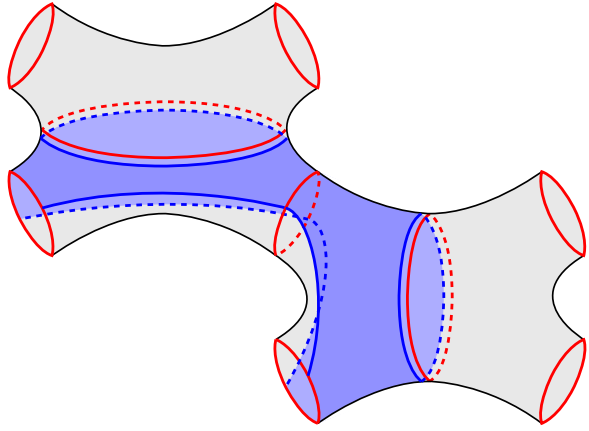}
    \caption{A possible embedding of $R$ in $X_m$ in the case that $\sigma_1 = \sigma_2$.}
    \label{equals}
\end{figure}

The other possibility is that $\sigma_1 \neq \sigma_2$ (see Figure \ref{nequals}).  In this case, we may suppose without loss of generality that $\sigma_1$ is replaced in an elementary move before the replacement of $\sigma_2$, so that there is an index $t$ such that $\sigma_1 = \gamma_t \notin \nu_t$ and $\sigma_2 \in \nu_t$.  Thus, $\gamma_t$ is a boundary component of $P_1$ and we have $Q_1 \subset P_1 \subset S_t$.  Further, $q_1$ is isotopic to a curve in $\pd S_t$, so by Lemma \ref{intersections}, $\gamma_t'$ meets $Q_1$ and there is an arc $\delta \subset \gamma_t' \cap (Y_1 \setminus \eta(Q_2))$.  Let $\delta'$ be any arc of $\nu_0 \cap Y_1$.  As $[\A],[\beta] \in [\nu_0 \cap Y_1]$, we have that either $\delta' \subset Q_2$ or $\delta \cap Q_2 = \emp$.  In the first case, $d_{Y_1}(\delta',\delta) \leq 2$ by Lemma \ref{diam}.  Otherwise, $\delta \subset Y_1 \setminus \eta(Q_2)$, and since $Y_1 \setminus \eta(Q_2)$ is a rectangular annulus, we may invoke Lemma \ref{seam1} to conclude that $d_{Y_1}(\delta',\delta) \leq 2$.  In any case, we have $d_{Y_1}(\nu_0,\delta) \leq 2$, as desired.
\begin{figure}
  \centering
    \includegraphics[width=1 \textwidth]{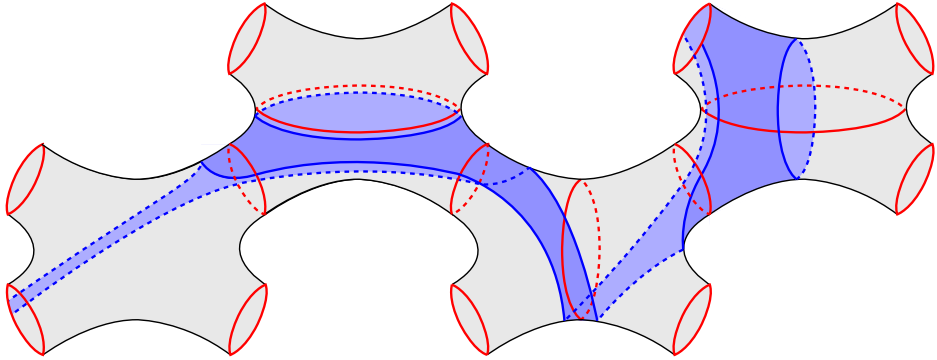}
    \caption{A possible embedding of $R$ in $X_m$ in the case that $\sigma_1 \neq \sigma_2$.  Here we have shown the case in which $\sigma_1 = \gamma_t$ is replaced by an elementary move before $\gamma_{r_2}$ (far right) is replaced by $\gamma_{r_2}'$, isotopic to $q_2$.}
    \label{nequals}
\end{figure}
\end{proof}
\end{claim}}
This exhausts all possible cases, completing the proof of the lemma.
\end{proof}
\end{lemma}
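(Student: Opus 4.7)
The plan is to pick the smallest index $m+1$ at which some support $X_{m+1}$ first contains a component $Y_1$ of $Y$, commute edges so that $X_{m+1}$ is connected, and dispose of the easy case $m=0$ by setting $q=1$. For $m\ge 1$ the strategy is to bound $d_Y(\nu_0,\nu_j)$ for each $j\le m$ by an Euler-characteristic count: partition the components $Y_i$ by the value $d_{Y_i}(\nu_0,\nu_j)\in\{0,1,2\}$ into sets $T_0,T_1,T_2$, and use Lemmas \ref{cont1}, \ref{cont2}, and \ref{corner} to obtain
\[
 d_Y(\nu_0,\nu_j)\;=\;|T_1|+2|T_2|\;\le\;\sum \cci_{X_j}(Y_i)\;\le\;\cci(X_j)\;\le\;\xi(X_j)+1\;\le\;j+1.
\]
If any of these inequalities is strict for some $j\le m$, we set $q=j$ and are done. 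Hence we may assume that equality holds everywhere, which forces $X_j$ to be planar, to contribute maximally to each $Y_i$, and to satisfy $\xi(X_j)=j$.

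The next step is to exploit this rigidity to rule out pathological local configurations. First, if $X_j\cap\pd Y$ contains a wave, then planarity of $X_j$ lets us split it along this wave into two pieces of half-integer Euler characteristic, so after reassembling the contributions of the $Y_i$ (each integral by assumption) there must be positive residual Euler characteristic outside $\bigcup Y_i$, contradicting equality. Second, if $X_j\cap Y_i$ contains a full pair of pants, then $\pd X_j$ has a component essential in $Y_i$, forcing $d_{Y_i}(\nu_0,\nu_j)=0$ while $\cci_{X_j}(Y_i)\ge 1$, again strict. Third, if $\gamma_{m+1}$ is separating in $X_{m+1}$, then inspection of how $\pd Y_1$ meets $X_m$ reduces back to one of the previous cases. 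Ruling out all three, we may assume $\gamma_{m+1}$ is nonseparating in $X_{m+1}$, so $X_m$ is connected, $\cci(X_m)=\cci(X_{m+1})$, and $\cci_{X_m}(Y_1)=\cci(Y_1)$.

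The main obstacle is now to control $d_{Y_1}(\nu_0,\nu_{m+1})$, since $Y_1\subset X_{m+1}$ means $\nu_{m+1}$ may contain curves buried deep in $Y_1$; to conclude one needs $d_{Y_1}(\nu_0,\nu_{m+1})\le \cci(Y_1)$, so that
\[
 d_Y(\nu_0,\nu_{m+1})\;\le\;\cci(X_{m+1})\;=\;\cci(X_m)\;\le\;m+1.
\]
When $Y_1=\Sigma_{1,1}$ one argues directly: the only way to realize $\cci_{X_m}(Y_1)=1$ without a wave or a full pair of pants is via an octagonal component, but four seams joining the two parallel boundary circles of $X_m$ isotopic to $\gamma_{m+1}$ cannot cobound an octagon in the planar surface $X_m$, giving a contradiction. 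The genuinely hard case is $Y_1=\Sigma_{0,4}$, where $X_m\cap Y_1$ is a rectangular pair of pants $R$ with a single boundary seam class $[\A]\subset\pd X_m\cap Y_1$ and two curve components $q_1,q_2$. The plan here is to track the stages $r_1<r_2$ at which each $q_i$ first appears as a newly added curve $\gamma_{r_i}'$ bounding a rectangular annulus $Q_i\subset R$ inside the four-holed sphere $S_{r_i}$, and to locate an elementary move producing some arc $\delta\subset \gamma_t'\cap R$ that lies in $\nu_{m+1}$ and is close (in $Y_1$) either to $[\A]$ or to the second boundary seam class $[\n]$ of $\pd Q_1$, both of which already belong to $\pi_{Y_1}(\nu_0)$. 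The technical Lemmas \ref{seam1} and \ref{seam2} then furnish $d_{Y_1}(\nu_0,\delta)\le 2$, yielding $d_{Y_1}(\nu_0,\nu_{m+1})\le 2$ and completing the proof with $q=m+1$. The trickiest subcases will be distinguishing whether $q_1,q_2$ are separated by an intermediate support $S$ that is a pair of pants versus a larger surface, since in the latter case one must first commute edges to relabel $S$ as $X_j$ and then transport the resulting arc $\delta$ back through the reverse commutation without losing the distance bound.
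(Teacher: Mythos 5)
Your proposal follows essentially the same route as the paper's proof: the same choice of $m$, the same Euler-characteristic chain $d_Y(\nu_0,\nu_j)=|T_1|+2|T_2|\le\cci(X_j)\le\xi(X_j)+1\le j+1$ via Lemmas \ref{cont1}, \ref{cont2}, and \ref{corner}, the same reduction by ruling out waves in $X_j\cap\pd Y$, full pairs of pants, and a separating $\gamma_{m+1}$, the same octagon contradiction for $Y_1=\Sigma_{1,1}$, and the same strategy of tracking the indices $r_1,r_2$ and the rectangular annuli $Q_1,Q_2$ inside the rectangular pair of pants $R$ to force $d_{Y_1}(\nu_0,\nu_{m+1})\le 2$ via Lemmas \ref{seam1} and \ref{seam2}. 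The $\Sigma_{0,4}$ case is only sketched where the paper works through several further subcases (e.g.\ whether $\gamma_{r_1}'$ meets $Q_1$, whether $S$ is a pair of pants, and whether $\sigma_1=\sigma_2$), but the plan is the paper's argument in outline rather than a different approach.
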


Before proceeding, we note that Lemma \ref{support-contain} holds for paths of any length, not only those of length $\cci(Y)$ or greater.

\begin{proof}[Proof of the Main Theorem]
Let $\nu_0,\ldots, \nu_p$ be a geodesic in $\Pa(\Sigma)$ such that $\nu_0$ and $\nu_p$ contain $Q$, with $Y = \bigcup Y_i$ the complementary subsurface of $Q$.  Recall that $d_Q(\nu_0,\nu_p) = d_Y(\nu_0,\nu_p)$, and suppose towards a contradiction that $\nu_1$ does not contain $Q$.  Then any curve $\gamma$ in $\nu_0 \cap \text{int}(Y)$ is also in $\nu_1$; hence $\pi_Y(\nu_0) = \pi_Y(\nu_1)$.  Let $q$ be an index such that
\[ d_Y(\nu_1,\nu_q) \leq q-1,\]
chosen so that $q$ is maximal with respect to all possible commutations of the path $\nu_1,\dots,\nu_p$.  We will prove that $q = p$, which implies that 
$$d(\nu_0,\nu_p) \le d_{Q}(\nu_0,\nu_p) = d_Y(\nu_0,\nu_p) = d_Y(\nu_1,\nu_p) = p -1.$$
This will contradict the assumption that $\nu_0,\dots,\nu_p$ is a geodesic and will complete the proof of the theorem. \\

Trivially, $q \geq 1$.  Let $X_j$ denote the support of the path $\nu_q,\dots,\nu_p$, where $q \leq j \leq p$.  By Lemma \ref{support-contain}, if $X_p$ contains a component $Y_i$ of $Y$, then after a possible commutation of edges there exists $q'$ such that $q < q' \leq p$ and $d_Y(\nu_q,\nu_{q'}) \leq q' - q$, and thus
\[ d_Y(\nu_1,\nu_{q'}) \leq d_Y(\nu_1,\nu_q) + d_Y(\nu_q,\nu_{q'}) \leq q' - 1,\]
contradicting the maximality of $q$.  Thus, we may assume that $X_p$ does not contain a component $Y_i$ of $Y$. \\
 
We will show that $\nu_p \cap Y \subset \nu_q \cap Y$.    Suppose by way of contradiction that there is an index $i$ and a curve $\gamma' \subset \nu_p \cap Y_i$ such that $\gamma' \notin \nu_q$.  Then $\gamma' \subset \text{int}(X_p)$.  For each curve $\gamma'' \subset \pd Y_i$, either $\gamma'' \notin \nu_q$, in which case $\gamma'' \subset \text{int}(X_p)$ or $\gamma'' \in \nu_q$, in which case $\gamma'' \subset \pd X_p$.  In any case, we have $Y_i \subset X_p$, a contradiction. \\

Since $\nu_p \cap Y$ contains only curves, $\nu_p \cap Y \subset \nu_q \cap Y$ implies $\pi_Y(\nu_p) = \pi_Y(\nu_q)$ and $d_Y(\nu_q,\nu_p) = 0$; thus by the maximality of $q$, we conclude $q = p$. Hence, as noted above, we have that $d(\nu_0,\nu_p) \le p-1$, contradicting that our path is a geodesic and completing the proof.
\end{proof}

\bibliographystyle{amsalpha}
\bibliography{bib_pants_flats.bbl}

\end{document}